\numberwithin{equation}{section}
\newcommand{\SB}[1]{\textcolor{black}{#1}}
\newtheorem{theorem}{Theorem}[section]
\newtheorem{thm}{Theorem}[section]
\newtheorem{lemma}{Lemma}[section]
\newtheorem{alg}[thm]{Algorithm}
\newcommand{\nn}{\boldsymbol{n}}
\newcommand{\tu}{\widetilde{u}}
\newcommand{\aaa}{\boldsymbol{a}}
\newcommand{\XTO}{X_{\mathcal{T}}}
\newcommand{\XTT}{X_{\mathcal{T}_-}}
\newcommand{\XOM}{X_{\Omega}}
\newcommand{\XOMM}{X_{\Omega_-}}
\newcommand{\jump}[1]{[\![#1]\!]}
\begin{document}

\title{A Fat boundary-type method for localized nonhomogeneous material problems}

\author{
Alex Viguerie\thanks{Department of Civil Engineering and Architecture, University of Pavia, Pavia, PV 27100, Italy(alexander.viguerie@unipv.it).}\and
Ferdinando Auricchio\thanks{Department of Civil Engineering and Architecture, University of Pavia, Pavia, PV 27100, Italy (auricchio@unipv.it).}\and
Silvia Bertoluzza 
\thanks{CNR Imati Enrico Magenes, Pavia, PV 27100
, Italy (silvia.bertoluzza@imati.cnr.it)}
}

\date{}

\maketitle

\begin{abstract}
Problems with localized nonhomogeneous material properties arise frequently in many applications and are a well-known source of difficulty in numerical simulation\textcolor{black}{s}. In certain applications \textcolor{black}{(including additive manufacturing)}, the physics of the problem may be considerably more complicated in relatively small portions of the domain, requiring a significantly finer \textcolor{black}{local} mesh compared to elsewhere in the domain. This can make the use of a uniform mesh numerically unfeasible. While nonuniform meshes can be employed, they may be challenging to generate (particularly for regions with complex boundaries) and more difficult to precondition. The problem becomes even more prohibitive when the region requiring a finer-level mesh changes in time, requiring the introduction of refinement and derefinement techniques. To address \textcolor{black}{the aforementioned challenges, we employ a technique related to the Fat boundary method \cite{ BMM2005, BIM2011,Maury2001} as a possible alternative.} We analyze the \textcolor{black}{proposed methodology} from a mathematical point of view and validate our findings on two-dimensional numerical tests.
\end{abstract}


\section{Introduction}
\par Problems with localized nonhomogeneous material properties are of great interest in engineering \cite{BRDP2018, CBB2013,DBRR2002,  EO1992, GGZ2007,  GMWP2012, HYDY2016,IM2016, KAFHKKR2015, KOCZR2018, LRMR2016, PPRZMHBS2015, PPRZMHBS2016, RSM2014, RSZ2018,  TT2017, WLGNMR2017}. In \textcolor{black}{such} problems, the varying physical properties between the different regions result in potentially large modeling differences. In many instances, a material which comprises a relatively small portion of the domain may feature significantly more complex physics and\textcolor{black}{, therefore, it may} require a higher mesh resolution than in the remainder of the domain. Common examples of such problems with high industrial interest are thermal problems in additive manufacturing  (where the nonlinear phase transformation phenomena occur only along a thin strip on the top of the domain)\cite{ BRDP2018, GMWP2012, HYDY2016,IM2016, KAFHKKR2015, KOCZR2018, LRMR2016, PPRZMHBS2015, PPRZMHBS2016, RSM2014, RSZ2018, TT2017,WLGNMR2017}, or fluid flow problems through immersed membranes (where the flow properties change inside a subregion of the domain) \cite{LWPLK2009, LL2003, LL1997, PPRP2005}. A simple diagram of such a physical situation is shown in Fig. \ref{fig:ImmCircIntro}.

\par While using fine mesh throughout the \textcolor{black}{whole} domain may give an acceptable numerical solution, the computational cost of such an approach may be prohibitive. In contrast, a mesh that is coarse in one region and fine in another, while computationally more attractive, may be difficult to generate and is also known to cause difficulties with preconditioning \cite{DBRR2002, DWZ2009, Elman, KHX2014}. Additionally, in some instances the boundary of the subregion may be geometrically complicated, or may change in time, requiring frequent \textcolor{black}{remeshing or the use of complicated local refinement and derefinement techniques} \cite{IM2016, KOCZR2018, PPRZMHBS2015, PPRZMHBS2016, RSM2014, WLGNMR2017}.
\par \textcolor{black}{We decided to approach such problems adopting a different methodology inspired by the Fat boundary method (see \cite{BMM2005, BIM2011, Maury2001}), an algorithm originally designed to handle geometries with holes in which generating conformal meshes is difficult.} The Fat boundary method works by solving problems on different meshes and coupling the different solutions by the \textcolor{black}{introduction of point forces within the domain}. In the present work, we develop a related technique that allows us to solve problems with localized nonhomogeneous features by using two separate meshes. In regions that require a higher mesh resolution, we solve the problem on a fine separate mesh; then, as in the \textcolor{black}{Fat boundary method}, we reinsert this solution \textcolor{black}{on} the coarse mesh as a forcing term. This approach can be viewed as \textcolor{black}{a generalization} of the \textcolor{black}{Fat boundary method}, since the subregions are not holes in the geometry but areas in which the material properties differ.  

\begin{figure}
\centering
\includegraphics[scale=.76]{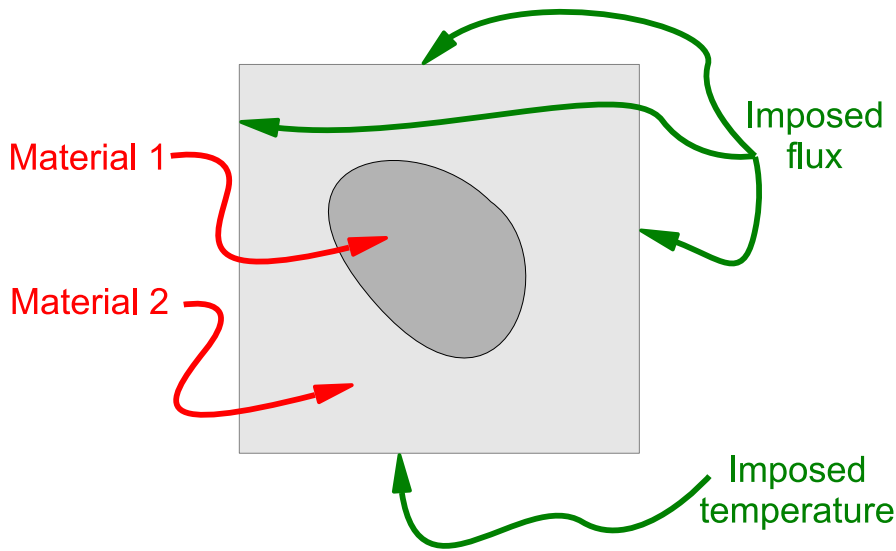} \caption{Example physical situation. The domain features and imposed flux along one region and imposed temperature on the others. The material properties differ within the domain.}\label{fig:ImmCircIntro}
\end{figure}

\par \textcolor{black}{We structure the paper as follows. We first introduce the model problems and derive split-problem formulations, in which we express the model problems (originally defined on a single mesh) as a pair of problems on two different meshes with coupling conditions.} Next, \textcolor{black}{we use the split formulation to naturally derive} two-level methods at both the continuous and discrete levels. We then present numerical examples of \textcolor{black}{the proposed approach for both steady and unsteady problems} and discuss the results. We conclude by discussing points for further improvement and future research directions. 


\section{Problem Splitting}

\begin{figure}
\centering
\includegraphics[scale=.76]{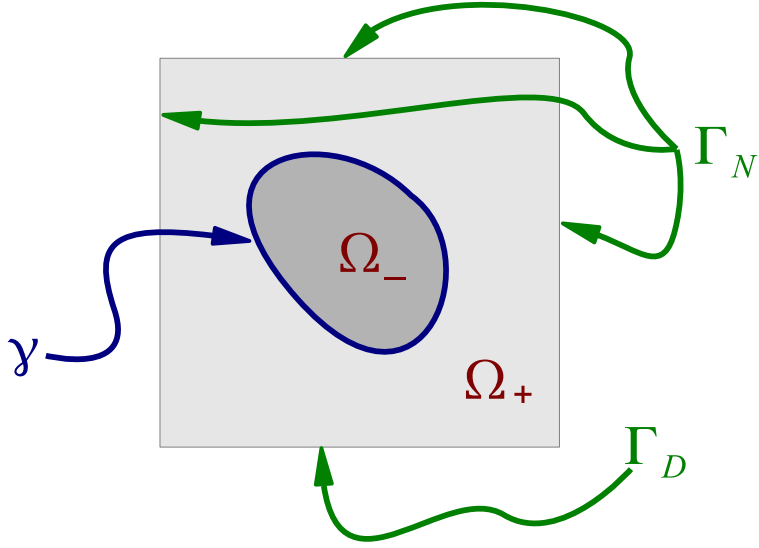} \caption{Example geometry for model problem. The material properties differ in $\Omega_-$ and $\Omega_+$.}\label{fig:ImmCirc1}
\end{figure}
In the following we focus on steady and unsteady thermal problems, which are often characterized by the presence of local features, such as a nonhomogeneous material inclusion or local nonlinear phase transition phenomena. Accordingly, in this section we formally deriv\textcolor{black}{e}  split formulation\textcolor{black}{s} \textcolor{black}{for the study of} such problems. The \textcolor{black}{proposed formulations will then naturally} form the basis for the two-level algorithm presented later in the work.
\par  We consider a domain $\Omega$ with a subregion $\Omega_- \subset \Omega$, as shown in Fig \ref{fig:ImmCirc1}. Letting $\Omega_+ = \Omega \setminus \overline \Omega_-$, $\gamma = \partial \Omega_-$, and denoting the unknown temperature field as $u$, the steady thermal problem is given by:
\begin{align}\begin{split}\label{Model1}
	-\nabla \cdot \left(\beta \nabla u \right) &= f \quad \text{in }  \Omega\SB{\setminus\gamma} \\
	u &= T_0 \quad \text{on } \Gamma_{D} \\
	\beta \frac{\partial u}{\partial \nn} &= q \quad \text{on } \Gamma_{N}\\
\SB{\jump{\beta \nabla u \cdot \nn}}&\SB{= 0 \quad \text{on }\gamma}\\
\SB{\jump{u}}&\SB{= 0 \quad \text{on }\gamma}
\end{split}\end{align}
\textcolor{black}{w}ith the unsteady variant defined as:
\begin{align}\begin{split}\label{Model2}
	\rho\frac{\partial u}{\partial t} -\nabla \cdot \left(\beta\nabla u \right) &= f \quad \text{in }  \Omega \times \left\lbrack t_0,\,t_{end} \right\rbrack \\
	u &= T_0 \quad \text{on } \Gamma_{D} \times \left\lbrack t_0,\,t_{end} \right\rbrack \\
	\beta \frac{\partial u}{\partial \nn} &= q(t) \quad \text{on } \Gamma_{N} \times \left\lbrack t_0,\,t_{end} \right\rbrack \\
	u &= u_0 \quad \text{at } t=t_0\textcolor{black}{.}\\
	\SB{\jump{\beta \nabla u \cdot \nn}}&\SB{= 0 \quad \text{on }\gamma}\ \\
	\SB{\jump{u}}&\SB{= 0 \quad \text{on }\gamma} 
\end{split}\end{align}
\textcolor{black}{where }$\partial u\big/\partial \nn$ denotes the normal derivative, $q$ is a heat flux, \SB{ $\jump{\cdot}$ denotes the jump across $\gamma$}, \textcolor{black}{$\beta$ denotes the heat conductivity, defined as:
\begin{eqnarray}\label{betaDef}
\beta := \begin{cases}
\beta_{+} \quad \text{in }  \Omega_+ \\
\beta_{-} \quad \text{in } \Omega_{-}
\end{cases},
\end{eqnarray}
and $\rho$  denotes the heat capacity,  defined as:
\begin{eqnarray}
\quad \rho := \begin{cases}
\rho_{+} \quad \text{in } \Omega_+ \\
\rho_{-} \quad \text{in } \Omega_{-}
\end{cases} \end{eqnarray}}
 \par In general, $\beta_{+}$, $\beta_{-}$, $\rho_+$ and $\rho_-$ are functions of time and space. However, as the constant-coefficient case greatly simplifies the mathematical formulation and analysis of the method, we will start by considering this simpler but important case, providing a more general analysis in an appendix at end of the present work.
 \par As the methods discussed here bear a strong relationship to the previously discussed Fat boundary method, we follow the same notation used in \cite{ BMM2005, BIM2011, Maury2001} and denote the trace of $H^1\left(\Omega\right)$ on $\gamma$ as $H^{1/2}\left(\gamma \right)$, with the corresponding dual spaces given by $H^{-1}\left(\Omega\right)$ and  $H^{-1/2}\left( \gamma \right)$ respectively. 
 \par For $\eta \in H^{-1/2}\left(\gamma \right)$, we define $\eta \delta_{\gamma} \in H^{-1}\left(\Omega\right) = H^1(\Omega)'$ as the linear functional such that:
  \begin{align}\label{deltaDef}
  	\int_{\Omega} \left(\eta \delta_{\gamma}\right)w = \int_{\gamma} \eta w \quad\quad \forall w \in H^1\left(\Omega\right),
  \end{align}
\textcolor{black}{and, in the spirit of the Fat boundary method, we propose to split Problem (\ref{Model1}) with constant $\beta_{+}$ and $\beta_{-}$ into two subproblems as follows:}
 \begin{align}\begin{split}\label{pb1NoAlg}
  	-\nabla \cdot \left(\beta_{+} \nabla {u} \right)  &= f\big|_{\Omega_+} +\frac{\beta_+}{\beta_-}f\big|_{\Omega_-}    + \left\lbrack \beta_{+} \frac{\partial \widetilde{u} }{\partial \nn}  \delta_{\gamma} - \beta_{-} \frac{\partial \widetilde{u}}{\partial \nn} \delta_{\gamma} \right\rbrack  \text{  in  } \Omega \\
	{u} &= T_0 \text{  on  } \Gamma_D \\
	\beta_{+} \frac{\partial {u}}{\partial \nn} &= q \text{  on  } \Gamma_N
\end{split} \end{align}
 and
\begin{align}\begin{split}\label{pb2NoAlg}
	-\nabla \cdot \left(\beta_{-} \nabla \widetilde{u} \right) &= f \text{  in  } \Omega_{-} \\
	\widetilde{u} &= u \text{  on  } \gamma .\end{split}\end{align}
\textcolor{black}{We now prove a theorem, stating the equivalence between the model Problem (\ref{Model1}) and Problems (\ref{pb1NoAlg}) and (\ref{pb2NoAlg}):}    
\begin{theorem}\label{thm:cons}
A temperature distribution $u$ solves \textcolor{black}{Problem (\ref{Model1})} with constant $\beta_{+}$ and $\beta_-$ if and only if the corresponding solution pair $(u, \widetilde{u})$ solves \textcolor{black}{Problems (\ref{pb1NoAlg}) and (\ref{pb2NoAlg})}.
\end{theorem}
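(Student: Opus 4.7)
The plan is to prove both directions by direct distributional integration by parts on $\Omega$, exploiting the fact that Problem (\ref{pb1NoAlg}) uses the single constant coefficient $\beta_+$ on all of $\Omega$. The delta source $\beta_+\,\partial \widetilde{u}/\partial\nn\,\delta_\gamma - \beta_-\,\partial \widetilde{u}/\partial\nn\,\delta_\gamma$ is designed precisely to absorb the surface contribution that appears on $\gamma$ when Green's formula is applied to a function which is only piecewise smooth across $\gamma$, while the rescaled volume source $(\beta_+/\beta_-)f|_{\Omega_-}$ compensates for using $\beta_+$ rather than $\beta_-$ inside $\Omega_-$. Everything will follow from writing these two corrections explicitly and matching them against the transmission conditions in (\ref{Model1}).

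For the ``only if'' direction, I would take a solution $u$ of (\ref{Model1}) and set $\widetilde{u}:=u|_{\Omega_-}$. Problem (\ref{pb2NoAlg}) then holds immediately since $\beta\equiv\beta_-$ on $\Omega_-$ and $\jump{u}=0$ on $\gamma$. To verify (\ref{pb1NoAlg}) I would test against an arbitrary $\phi\in H^1(\Omega)$ vanishing on $\Gamma_D$, split $\int_\Omega \beta_+\nabla u\cdot\nabla\phi$ into $\int_{\Omega_+}$ and $\int_{\Omega_-}$, and integrate by parts in each piece. The volume terms produce $f|_{\Omega_+}\phi$ on $\Omega_+$ and, using $-\beta_+\Delta u = (\beta_+/\beta_-)(-\beta_-\Delta u)=(\beta_+/\beta_-)f$, the term $(\beta_+/\beta_-)f|_{\Omega_-}\phi$ on $\Omega_-$. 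The two surface contributions on $\gamma$ combine into $\beta_+\bigl(\partial u|_{\Omega_-}/\partial\nn - \partial u|_{\Omega_+}/\partial\nn\bigr)\phi$; substituting the flux transmission $\beta_+\,\partial u|_{\Omega_+}/\partial\nn = \beta_-\,\partial u|_{\Omega_-}/\partial\nn$ from (\ref{Model1}) converts this into $(\beta_+-\beta_-)\,\partial\widetilde{u}/\partial\nn\,\phi$, which via (\ref{deltaDef}) is exactly the delta source on the right-hand side of (\ref{pb1NoAlg}).

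For the ``if'' direction, I would assume $(u,\widetilde{u})$ solves the split pair. Testing (\ref{pb1NoAlg}) against $\phi$ supported in the interior of $\Omega_+$ and of $\Omega_-$ separately yields $-\beta_+\Delta u=f$ in $\Omega_+$ and $-\beta_-\Delta u=f$ in $\Omega_-$, while $\jump{u}=0$ on $\gamma$ is automatic from $u\in H^1(\Omega)$. The key step is the identification $\widetilde{u}=u|_{\Omega_-}$: both functions solve $-\beta_-\Delta v=f$ in $\Omega_-$ with the same Dirichlet data $v=u|_\gamma$, so uniqueness of the Dirichlet problem on $\Omega_-$ forces the identification. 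Running the integration by parts of the previous paragraph in reverse and matching the coefficients of $\delta_\gamma$ on both sides of (\ref{pb1NoAlg}) then produces $\beta_+\,\partial u|_{\Omega_+}/\partial\nn = \beta_-\,\partial u|_{\Omega_-}/\partial\nn$, i.e.\ the missing flux transmission condition. The most delicate point is the distributional bookkeeping on $\gamma$---the sign convention for $\nn$ and the fact that $u$ is only globally $H^1$---but once this is set up consistently, both implications reduce to algebraic manipulation of the coefficients $\beta_\pm$.
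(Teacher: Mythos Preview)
Your proposal is correct and follows essentially the same route as the paper: define $\widetilde u := u|_{\Omega_-}$, apply Green's formula on the subdomains to isolate the volume term $(\beta_+/\beta_-)f|_{\Omega_-}$ and the surface term $(\beta_+-\beta_-)\partial\widetilde u/\partial\nn\,\delta_\gamma$, and for the converse use uniqueness of the Dirichlet problem on $\Omega_-$ to force $\widetilde u = u|_{\Omega_-}$ before recovering the flux jump condition. The only cosmetic difference is that the paper writes $\int_\Omega \beta_+\nabla u\cdot\nabla\phi$ as the whole-domain integral minus/plus $\int_{\Omega_-}(\beta_+-\beta_-)\nabla\widetilde u\cdot\nabla\phi$ and integrates by parts only on $\Omega_-$, whereas you split $\Omega$ into $\Omega_+\cup\Omega_-$ and integrate by parts on both pieces, invoking the transmission condition $\jump{\beta\nabla u\cdot\nn}=0$ explicitly; the two computations are algebraically equivalent.
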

\begin{proof} Let $u$ be a solution of (\ref{Model1}). Define $\widetilde{u}$ as:
\begin{equation}\label{uTildeDef}
\widetilde u := u|_{\Omega_-}.
\end{equation}
This of course implies:
\begin{align}\label{uTildeExp}
	-\nabla \cdot \left(\beta_-\nabla\widetilde{u}\right) &= f \text{  in  } \Omega_-.
\end{align}
Then for $u \in H^{1}\left(\Omega\right)$ and $\varphi \in C_{0}^{\infty}\left(\Omega\right)$:
\begin{align}\begin{split}\label{proofStep2}
\int_{\Omega} \beta \nabla{u} \cdot \nabla{\varphi} &= \int_{\Omega} \beta_+ \nabla{u} \cdot \nabla{\varphi} - \int_{\Omega_-} \beta_+ \nabla u \cdot \nabla \varphi + \int_{\Omega_-} \beta_- \nabla u \cdot \nabla \varphi  \\
	&= \int_{\Omega} \beta_+ \nabla{u} \cdot \nabla{\varphi} - \int_{\Omega_-} \beta_+ \nabla \widetilde{u} \cdot \nabla \varphi + \int_{\Omega_-} \beta_- \nabla \widetilde{u} \cdot \nabla \varphi  
\end{split}\end{align}
Since $u$ satisfies (\ref{Model1}) weakly in $\Omega$ it follows from \textcolor{black}{(\ref{proofStep2}) that:
\begin{align}\label{moreFormal}
	\int_{\Omega} \beta_+ \nabla{u} \cdot \nabla{\varphi} - \int_{\Omega_-} \beta_+ \nabla \widetilde{u} \cdot \nabla \varphi + \int_{\Omega_-} \beta_- \nabla \widetilde{u} \cdot \nabla \varphi &= \int_{\Omega} f\varphi \text{  in  } \Omega_-.
\end{align}}
\par In particular, $-\beta_- \Delta \widetilde{u} = f$ weakly in $\Omega_-$ (since $\beta_-$ is constant). Integrating the integrals in $\Omega_{-}$ on the right hand side of (\ref{proofStep2}) by parts yields:
\begin{align}\begin{split}\label{proofStep3}\MoveEqLeft[1] 
- \int_{\Omega_-} \beta_+ \nabla \widetilde{u} \cdot \nabla \varphi + \int_{\Omega_-} \beta_- \nabla \widetilde{u} \cdot \nabla \varphi \\ \quad{}& = \int_{\Omega_-} \beta_+ \Delta \widetilde{u} \varphi - \int_{\gamma} \beta_{+} \frac{\partial \widetilde{u}}{\partial \nn} \varphi - \int_{\Omega_-} \beta_- \Delta \widetilde{u} \varphi + \int_{\gamma} \beta_{-} \frac{\partial \widetilde{u}}{\partial \nn} \varphi \\
&= - \int_{\Omega_-} \frac{\beta_+}{\beta_-} f \varphi - \int_{\gamma} \beta_{+} \frac{\partial \widetilde{u}}{\partial \nn} \varphi + \int_{\Omega_-} f \varphi + \int_{\gamma} \beta_{-} \frac{\partial \widetilde{u}}{\partial \nn} \varphi \end{split}\end{align}	
Combining (\ref{proofStep3}) with (\ref{proofStep2}) then gives: 
\begin{equation}
    \int_{\Omega} \beta_+ \nabla{u} \cdot \nabla{\varphi} = \int_{\Omega_+} f\varphi +  \frac{\beta_+}{\beta_-} \int_{\Omega_-} f\varphi + \beta_+\int_{\gamma} \frac{\partial \widetilde{u} }{\partial \nn} \varphi -  \beta_-\int_{\gamma} \frac{\partial \widetilde{u} }{\partial \nn} \varphi,
\end{equation}
\textcolor{black}{and, after backward integration over the integral in $\Omega$,}
\begin{equation}
-\nabla \cdot (\beta_+ \nabla u) = f\big|_{\Omega_+} + \frac{\beta_+}{\beta_-} f\big|_{\Omega_-} + (\beta_+-\beta_-) \frac{\partial \widetilde{u}}{\partial \nn}\delta_{\gamma} \text{ in } \Omega.
\end{equation}
which was to be shown. 

\

Let us now assume that  $(u,\widetilde{u})$ is a solution to the coupled problem. We start by proving that $\widetilde u = u $ in $\Omega_-$. In fact, it is not difficult to verify that $u - \widetilde u$ solves the equation
\[
-\nabla \cdot \beta_- \nabla (u - \widetilde u) = 0, \quad \text{in }\Omega_-, \qquad u - \widetilde u = 0, \quad \text{on }\gamma,
\]
that has the unique solution $u - \widetilde u = 0$. As a consequence, one has that
\[
-\nabla \cdot \beta u = f, \quad \text{in } \Omega \setminus \gamma.
\]

On the other hand, $u$ verifies
\[
\jump{\beta^+ \nabla u \cdot \nn} = (\beta^+ - \beta^ - ) \frac{\partial \widetilde u}{\partial \nn} = (\beta^+ - \beta^ - ) \frac{\partial u}{\partial \nn}.
\]
This implies $\jump{\beta \nabla u \cdot \nn} = 0$.
\end{proof}

\textcolor{black}{For the unsteady thermal problem, we proceed with the same approach and split (\ref{Model2}) into two subproblems as:} 
 \begin{align}\begin{split}\label{pb1NoAlgUnst}\MoveEqLeft[4]
  \rho_+\frac{\partial u}{\partial t} + \left(\frac{\rho_- \beta_+}{\beta_-} -  \rho_+\right) \frac{\partial \widetilde{u} }{\partial t}\big|_{\Omega_-}  	-\nabla \cdot \left(\beta_{+} \nabla {u} \right)  \\ \quad{}& = f\big|_{\Omega_+} +\frac{\beta_+}{\beta_-}f\big|_{\Omega_-}    + \left\lbrack \beta_{+} \frac{\partial \widetilde{u} }{\partial \nn}  \delta_{\gamma} - \beta_{-} \frac{\partial \widetilde{u}}{\partial \nn} \delta_{\gamma} \right\rbrack  \text{  in  } \Omega \times \left\lbrack t_0,\,t_{end} \right\rbrack \\
	{u} &= T_0 \text{  on  } \Gamma_{D} \times \left\lbrack t_0,\,t_{end} \right\rbrack \\
\beta_+ \frac{\partial u}{\partial \nn} &= q(t) \quad \text{on } \Gamma_{N} \times \left\lbrack t_0,\,t_{end} \right\rbrack \\
\end{split} \end{align}
 and
\begin{align}\begin{split}\label{pb2NoAlgUnst}
	\rho_-\frac{\partial \widetilde{u}}{\partial t} -\nabla \cdot \left(\beta_{-} \nabla \widetilde{u} \right) &= f \text{  in  } \Omega_{-}(t) \times \left\lbrack t_0,\,t_{end}\right\rbrack \\
	\widetilde{u} &= u \text{  on  } \gamma(t) \times \left\lbrack t_0,\,t_{end}\right\rbrack, \end{split}\end{align}
	which clearly need to be properly complemented by suitable initial conditions.
\begin{theorem}\label{thm:cons2}
A temperature distribution $u(t)$ solves \textcolor{black}{Problem (\ref{Model2})} with constant $\beta_{+}$, $\beta_-$, $\rho_{+}$ and $\rho_-$ if and only if the corresponding solution pair $(u(t), \widetilde{u}(t) )$ solves \textcolor{black}{Problems (\ref{pb1NoAlgUnst}) and (\ref{pb2NoAlgUnst}).}
\end{theorem}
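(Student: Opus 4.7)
The plan is to mimic, step by step, the argument used for Theorem \ref{thm:cons}, treating the time derivative in exactly the same way the spatial diffusion was treated in the steady case: we decompose the $\rho\,\partial_t u$ contribution into a ``$\rho_+$ everywhere'' piece plus a correction supported on $\Omega_-$, and then use the equation satisfied by $\widetilde u$ in $\Omega_-$ to convert the remaining $\beta_-\Delta\widetilde u$ terms into combinations of $\rho_-\,\partial_t\widetilde u$ and $f$.

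For the forward direction, I would let $u$ solve (\ref{Model2}) and set $\widetilde u:=u|_{\Omega_-}$, so that (\ref{pb2NoAlgUnst}) is immediate (the interior PDE and the trace condition on $\gamma$ hold by construction, and the initial datum $\widetilde u(t_0)=u_0|_{\Omega_-}$ is inherited from $u_0$). Taking any $\varphi\in C_0^\infty(\Omega)$ and testing the weak form of (\ref{Model2}) at fixed $t$, I would write
\[
\int_\Omega \rho\,\partial_t u\,\varphi \;=\; \int_\Omega \rho_+\,\partial_t u\,\varphi \;+\; \int_{\Omega_-}(\rho_- - \rho_+)\,\partial_t\widetilde u\,\varphi,
\]
in perfect analogy with the decomposition of $\int_\Omega \beta\nabla u\cdot\nabla\varphi$ done in (\ref{proofStep2}). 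The spatial part is then processed exactly as in (\ref{proofStep3}), but now integration by parts on $\Omega_-$ produces $\beta_\pm\Delta\widetilde u$ terms which, using $\beta_-\Delta\widetilde u=\rho_-\partial_t\widetilde u - f$ in $\Omega_-$, rewrite as
\[
\beta_+\Delta\widetilde u \;=\; \tfrac{\beta_+}{\beta_-}\bigl(\rho_-\partial_t\widetilde u - f\bigr).
\]
Collecting all contributions, the coefficient of $\partial_t\widetilde u|_{\Omega_-}$ combines into $\rho_-\beta_+/\beta_- - \rho_+$, the bulk source collapses to $f|_{\Omega_+}+(\beta_+/\beta_-)f|_{\Omega_-}$, and the surface residuals on $\gamma$ assemble into the distribution $[\beta_+\partial_\nn\widetilde u - \beta_-\partial_\nn\widetilde u]\delta_\gamma$, which is exactly (\ref{pb1NoAlgUnst}). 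The Dirichlet and Neumann data transfer verbatim from (\ref{Model2}).

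For the converse, I would assume that $(u,\widetilde u)$ solves (\ref{pb1NoAlgUnst})--(\ref{pb2NoAlgUnst}) with consistent initial data ($u(t_0)=u_0$ on $\Omega$, $\widetilde u(t_0)=u_0|_{\Omega_-}$) and show first that $\widetilde u = u$ in $\Omega_-$. Restricting (\ref{pb1NoAlgUnst}) to the interior of $\Omega_-$ (no $\delta_\gamma$ contribution), multiplying (\ref{pb2NoAlgUnst}) by $\beta_+/\beta_-$, and subtracting, the $\partial_t\widetilde u$ and $f$ terms cancel and one is left with
\[
\rho_+\,\partial_t(u-\widetilde u) \;-\; \beta_+\,\Delta(u-\widetilde u) \;=\; 0 \quad\text{in }\Omega_-,
\]
together with $u-\widetilde u=0$ on $\gamma$ (from the trace condition in (\ref{pb2NoAlgUnst})) and $u-\widetilde u=0$ at $t=t_0$ by the assumed compatibility of the initial data. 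Uniqueness for the heat equation then gives $u\equiv\widetilde u$ on $\Omega_-\times[t_0,t_{end}]$. With this identification in hand, the computation of the jump $\jump{\beta\nabla u\cdot\nn}$ and the verification that $u$ solves (\ref{Model2}) in $\Omega\setminus\gamma$ is identical to the concluding part of the proof of Theorem \ref{thm:cons}.

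The main obstacle I expect is bookkeeping rather than conceptual: one must carefully verify that the somewhat unusual coefficient $\rho_-\beta_+/\beta_- - \rho_+$ is precisely what emerges from combining the $\rho_- - \rho_+$ mass correction with the contribution coming from eliminating $\beta_-\Delta\widetilde u$ via the $\widetilde u$-equation. Beyond that, the only genuinely new ingredient compared to Theorem \ref{thm:cons} is replacing the elliptic uniqueness argument by uniqueness for a homogeneous parabolic Dirichlet problem on $\Omega_-$, which is standard provided the initial data of the two subproblems agree on $\Omega_-$.
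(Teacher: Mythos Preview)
Your proposal is correct and follows essentially the same route as the paper's proof: the forward direction decomposes both $\rho\,\partial_t u$ and $\beta\nabla u\cdot\nabla\varphi$ exactly as you describe, uses $-\beta_-\Delta\widetilde u=f-\rho_-\partial_t\widetilde u$ to eliminate the Laplacian, and collects the $\partial_t\widetilde u$ coefficient into $\rho_-\beta_+/\beta_--\rho_+$; the converse direction likewise reduces to showing that $u-\widetilde u$ solves the homogeneous $(\rho_+,\beta_+)$ heat equation on $\Omega_-$ with zero boundary and initial data, then invokes parabolic uniqueness. The only cosmetic difference is that the paper carries out the converse via testing against $\varphi\in H_0^1(\Omega_-)$ rather than your direct ``multiply (\ref{pb2NoAlgUnst}) by $\beta_+/\beta_-$ and subtract'' manipulation, but these are equivalent computations.
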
   
\begin{proof}
This proof strongly resembles \textcolor{black}{the one adopted in Theorem \ref{thm:cons}}. As previously, let $u(t)$ be a solution of (\ref{Model2}). The dependence \textcolor{black}{of $u$ and $\tu$ on $t$ is assumed for the remainder of this proof, even if not explicitly noted}. We again define:
\begin{eqnarray}\label{uTildeDef2}
\widetilde{u} := u|_{\Omega_-}
\end{eqnarray}
Then for $u \in H^{1}\left(\Omega\right)$ and $\varphi \in C_{0}^{\infty}\left(\Omega\right)$:
\begin{align}\begin{split}\label{proofStep2Unst}\MoveEqLeft[4]
\int_{\Omega} \rho \frac{\partial u}{\partial t}\varphi + \int_{\Omega} \beta \nabla{u} \cdot \nabla{\varphi} \\ \quad{}& = \int_{\Omega} \rho_+ \frac{\partial u}{\partial t} \varphi - \int_{\Omega_-} \rho_+ \frac{\partial u}{\partial t} \varphi + \int_{\Omega_-} \rho_- \frac{\partial u}{\partial t} \varphi \\
\quad{}& +  \int_{\Omega} \beta_+ \nabla{u} \cdot \nabla{\varphi} - \int_{\Omega_-} \beta_+ \nabla u \cdot \nabla \varphi + \int_{\Omega_-} \beta_- \nabla u \cdot \nabla \varphi \\
\quad{}& = \int_{\Omega} \rho_+ \frac{\partial u}{\partial t} \varphi - \int_{\Omega_-} \rho_+ \frac{\partial \widetilde{u}}{\partial t} \varphi + \int_{\Omega_-} \rho_- \frac{\partial \widetilde{u}}{\partial t} \varphi \\
\quad{}& +  \int_{\Omega} \beta_+ \nabla{u} \cdot \nabla{\varphi} - \int_{\Omega_-} \beta_+ \nabla \widetilde{u} \cdot \nabla \varphi + \int_{\Omega_-} \beta_- \nabla \widetilde{u} \cdot \nabla \varphi \\
\end{split}\end{align}
	\textcolor{black}{Since $u$ satisfies (\ref{Model2}) weakly in $\Omega$, it follows from (\ref{proofStep2Unst}) that 
\begin{align}\begin{split}\label{moreClarity2}\MoveEqLeft[4]	
	\int_{\Omega} \rho_+ \frac{\partial u}{\partial t} \varphi - \int_{\Omega_-} \rho_+ \frac{\partial \widetilde{u}}{\partial t} \varphi + \int_{\Omega_-} \rho_- \frac{\partial \widetilde{u}}{\partial t} \varphi \\
\quad{}& +  \int_{\Omega} \beta_+ \nabla{u} \cdot \nabla{\varphi} - \int_{\Omega_-} \beta_+ \nabla \widetilde{u} \cdot \nabla \varphi + \int_{\Omega_-} \beta_- \nabla \widetilde{u} \cdot \nabla \varphi \\
\quad{}&= \int_{\Omega} f \varphi  \text{  in  } \Omega_-,
\end{split}\end{align}
while definition (\ref{uTildeDef2}) ensures:
\begin{align}\label{lapRelation}
-\beta_- \Delta \widetilde{u} = f - \rho_- \frac{\partial \widetilde{u} }{\partial t}\text{  in  } \Omega_-.
\end{align}}
Here we have used the fact that $\beta_+$ and $\beta_-$ are constant. Integration by parts and (\ref{lapRelation}) then give:
	  \begin{align}\begin{split}\label{proofStep3Unst}\MoveEqLeft[1] 
- \int_{\Omega_-} \beta_+ \nabla \widetilde{u} \cdot \nabla \varphi + \int_{\Omega_-} \beta_- \nabla \widetilde{u} \cdot \nabla \varphi \\ \quad{}& = \int_{\Omega_-} \beta_+ \Delta \widetilde{u} \varphi - \int_{\gamma} \beta_{+} \frac{\partial \widetilde{u}}{\partial \nn} \varphi - \int_{\Omega_-} \beta_- \Delta \widetilde{u} \varphi + \int_{\gamma} \beta_{-} \frac{\partial \widetilde{u}}{\partial \nn} \varphi \\
&= - \int_{\Omega_-} \frac{\beta_+}{\beta_-} f \varphi +\int_{\Omega_-} \frac{\rho_- \beta_+}{\beta_-} \frac{\partial \widetilde{u}}{\partial t} \varphi - \int_{\gamma} \beta_{+} \frac{\partial \widetilde{u}}{\partial \nn} \varphi + \int_{\Omega_-} f \varphi - \int_{\Omega_-} \rho_- \frac{\partial \widetilde{u}}{\partial t}  + \int_{\gamma} \beta_{-} \frac{\partial \widetilde{u}}{\partial \nn} \varphi \end{split}\end{align}	
Combining (\ref{proofStep3Unst}) and (\ref{proofStep2Unst}) and simplifying gives:
\begin{align}\begin{split}\label{proofStep4Unst}\MoveEqLeft[1]
\int_{\Omega} \rho_+ \frac{\partial u}{\partial t} \varphi + \int_{\Omega} \beta_+ \nabla u \cdot \nabla \varphi \\ \quad{}& = 
\left(\rho_+ - \frac{\rho_- \beta_+}{\beta_-} \right)\int_{\Omega_-} \frac{\partial \widetilde{u} }{\partial t} \varphi + \int_{\Omega_+} f\varphi + \frac{\beta_+}{\beta_-} \int_{\Omega_-} f\varphi + \beta_+\int_{\gamma} \frac{\partial \widetilde{u} }{\partial \nn} \varphi -  \beta_-\int_{\gamma} \frac{\partial \widetilde{u} }{\partial \nn} \varphi
\end{split}\end{align}
\textcolor{black}{which, after backward integration over $\Omega$, implies:}
\begin{align}\begin{split}\label{resultUnst}\MoveEqLeft[1]
\rho_+ \frac{\partial u}{\partial t} + (\frac{\rho_- \beta_+}{\beta_-} -  \rho_+) \frac{\partial \widetilde{u} }{\partial t}\big|_{\Omega_-}  -\nabla \cdot (\beta_+ \nabla u) \\ \quad{}& = f\big|_{\Omega_+} + \frac{\beta_+}{\beta_-} f\big|_{\Omega_-} + (\beta_+-\beta_-) \frac{\partial \widetilde{u}}{\partial \nn}\delta_{\gamma} \text{ in } \Omega \times \left\lbrack t_0,\,t_{end} \right\rbrack \end{split}\end{align}
\textcolor{black}{w}hich was to be shown. 
\par Now suppose the pair $(u,\tu)$ solves Problem (\ref{pb1NoAlgUnst}) and Problem (\ref{pb2NoAlgUnst}). We seek to demonstrate that $u-\tu$ solves the equation:
\begin{align}\begin{split}\label{verifPbmK}
	\rho_+\frac{\partial (u-\tu)}{\partial t} -\nabla \cdot \left(\beta_{+} \nabla (u-\tu) \right) &= 0 \text{  in  } \Omega_{-}(t) \times \left\lbrack t_0,\,t_{end}\right\rbrack \\
	u-\widetilde{u} &= 0 \text{  on  } \gamma(t) \times \left\lbrack t_0,\,t_{end}\right\rbrack 
\end{split}\end{align}
with initial condition $(u-\tu)(t_0)=0$. One may verify that (\ref{verifPbmK}) has a unique solution 0 in $\Omega_-(t)$, and hence if $u-\tu$ satisfies (\ref{verifPbmK}), $u=\tu$ in $\Omega_-(t)$. 
\par For all $\varphi \in H_0^1(\Omega_-)$ we have\footnote{Note that as (\ref{verifPbmK}) is zero on the boundary, we may demand this on our test space as well without loss of generality.}:
\begin{align}\begin{split}\label{unstCons1}
	\rho_+\int_{\Omega_-}\frac{\partial \tu}{\partial t}\varphi - \beta_+ \int_{\Omega_-} \Delta \tu \, \varphi &= \rho_+\int_{\Omega_-} \frac{\partial u}{\partial t}\varphi - \beta_+\int_{\Omega_-} \Delta u \varphi.
\end{split}\end{align}
From (\ref{lapRelation}), 
\begin{align}\begin{split}\label{unstCons2}
	-\beta_+\int_{\Omega_-} \Delta \tu \, \varphi &= \frac{\beta_+}{\beta_-} \int_{\Omega_-} f \varphi - \frac{\rho_-\beta_+}{\beta_-} \int_{\Omega_-} \frac{\partial \tu}{\partial t}\varphi,
\end{split}\end{align}
 which substituting into (\ref{unstCons1}) and applying the definition of Problem (\ref{pb1NoAlgUnst}) gives:
 \begin{align}\begin{split}\label{unstCons3}
\left(\rho_+ -\frac{\rho_-\beta_+}{\beta_-}\right) \int_{\Omega_-}\frac{\partial \tu}{\partial t}\varphi + \frac{\beta_+}{\beta_-} \int_{\Omega_-} f \varphi &= \rho_+\int_{\Omega_-} \frac{\partial u}{\partial t}\varphi - \beta_+\int_{\Omega_-} \Delta u \varphi \\
\rho_+\int_{\Omega_-} \frac{\partial u}{\partial t}\varphi - \beta_+\int_{\Omega_-} \Delta u \varphi &= \rho_+\int_{\Omega_-} \frac{\partial u}{\partial t}\varphi - \beta_+\int_{\Omega_-} \Delta u \varphi
\end{split}\end{align}
establishing $\tu-u=0$ in $\Omega_-$. As in the proof of Theorem 2.1,
\begin{align} 
\jump{\beta^+ \nabla u \cdot \nn} = (\beta^+ - \beta^ - ) \frac{\partial \widetilde u}{\partial \nn} = (\beta^+ - \beta^ - ) \frac{\partial u}{\partial \nn}, \end{align}
implying $\jump{\beta \nabla u \cdot \nn}=0$ and hence the result.
 \end{proof}
When $\beta$ is non-constant, the splitting of the steady problem (\ref{Model1}) is given by:
 \begin{align}\begin{split}\label{pb1NoAlgNonCons}\MoveEqLeft[1]
  	-\nabla \cdot \left(\beta_{+} \nabla {u} \right) \\ \quad{}& =  f\big|_{\Omega_+} +\frac{\beta_+}{\beta_-}f\big|_{\Omega_-}
  	+\frac{\beta_+}{\beta_-}\nabla \widetilde{u} \cdot \nabla \beta_- \big|_{\Omega_-}
  	- \nabla \widetilde{u} \cdot \nabla \beta_+ \big|_{\Omega_-}
  	    +  \left(\beta_{+}-\beta_-\right) \frac{\partial \widetilde{u} }{\partial \nn}  \delta_{\gamma}   \text{  in  } \Omega \\
	{u} &= T_0 \text{  on  } \Gamma_D \\
	\beta_{+} \frac{\partial {u}}{\partial \nn} &= q \text{  on  } \Gamma_N
\end{split} \end{align}
 and

\begin{align}\begin{split}\label{pb2NoAlgNonCons}
	-\nabla \cdot \left(\beta_{-} \nabla \widetilde{u} \right) &= f \text{  in  } \Omega_{-} \\
	\widetilde{u} &= u \text{  on  } \gamma .\end{split}\end{align}
Similarly, for Problem (\ref{Model2}) with non-constant $\beta$ and $\rho$, the splitting reads: 
 \begin{align}\begin{split}\label{pb1NoAlgNonConsUnst}\MoveEqLeft[1]
  \rho_+\frac{\partial u}{\partial t} + \left(\frac{\rho_-\beta_+}{\beta_-}-\rho_+ \right)	\frac{\partial \tu}{\partial t}\big|_{\Omega_-} \\ \quad{}& - \nabla \cdot \left(\beta_{+} \nabla {u} \right)  =  f\big|_{\Omega_+} +\frac{\beta_+}{\beta_-}f\big|_{\Omega_-}
  	+\frac{\beta_+}{\beta_-}\nabla \widetilde{u} \cdot \nabla \beta_- \big|_{\Omega_-}
  	\\ \quad{}&  - \nabla \widetilde{u} \cdot \nabla \beta_+ \big|_{\Omega_-}
  	    +  \left(\beta_{+}-\beta_-\right) \frac{\partial \widetilde{u} }{\partial \nn}  \delta_{\gamma}   \text{  in  } \Omega \times \lbrack t_0,\,t_{end}\rbrack \\
	{u} &= T_0 \text{  on  } \Gamma_D \times \lbrack t_0,\,t_{end}\rbrack  \\
	\beta_{+} \frac{\partial {u}}{\partial \nn} &= q(t) \text{  on  } \Gamma_N \times \lbrack t_0,\,t_{end}\rbrack \\
\end{split} \end{align}
 and

\begin{align}\begin{split}\label{pb2NoAlgNonConsUnst}
\rho_-\frac{\partial \tu}{\partial t}	-\nabla \cdot \left(\beta_{-} \nabla \widetilde{u} \right) &= f \text{  in  } \Omega_{-} \\
	\widetilde{u} &= u \text{  on  } \gamma, \end{split}\end{align}
	which must be also equipped with suitable initial conditions.
\begin{theorem}\label{thm:nonconsSteady}
A temperature distribution $u$ solves Problem (\ref{Model1}) for $\beta_- \neq 0 \in \Omega_-$, \newline $\frac{\beta_+}{\beta_-} \in H^1(\Omega_-)$ if and only if the corresponding solution pair $(u, \tu)$ solves Problems (\ref{pb1NoAlgNonCons}) and (\ref{pb2NoAlgNonCons}). 	
\end{theorem}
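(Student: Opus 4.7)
\emph{Sufficiency.} Assume $u$ satisfies (\ref{Model1}) and set $\tu := u|_{\Omega_-}$, so that (\ref{pb2NoAlgNonCons}) holds by construction. For $\varphi\in C_0^\infty(\Omega)$, I split the weak form exactly as in (\ref{proofStep2}):
\begin{equation*}
\int_\Omega \beta\nabla u\cdot\nabla\varphi = \int_\Omega \beta_+\nabla u\cdot\nabla\varphi + \int_{\Omega_-}(\beta_- - \beta_+)\nabla\tu\cdot\nabla\varphi.
\end{equation*}
The second integral is then handled by integration by parts on $\Omega_-$. In the constant case one substituted $-\beta_-\Delta\tu = f$; here, the correct replacement is the product-rule identity
\begin{equation*}
\nabla\cdot(\beta_+\nabla\tu) = \nabla\cdot\!\left(\tfrac{\beta_+}{\beta_-}\,\beta_-\nabla\tu\right) = \beta_-\nabla\!\left(\tfrac{\beta_+}{\beta_-}\right)\cdot\nabla\tu \;-\; \tfrac{\beta_+}{\beta_-}\,f,
\end{equation*}
which is meaningful thanks to the hypothesis $\beta_+/\beta_-\in H^1(\Omega_-)$ and $\beta_-\neq 0$. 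Expanding $\beta_-\nabla(\beta_+/\beta_-) = \nabla\beta_+ - (\beta_+/\beta_-)\nabla\beta_-$, the three extra volume terms in (\ref{pb1NoAlgNonCons}) appear, together with the boundary integral $\int_\gamma(\beta_+-\beta_-)(\partial\tu/\partial\nn)\varphi$, which by (\ref{deltaDef}) is exactly the distribution $(\beta_+-\beta_-)(\partial\tu/\partial\nn)\delta_\gamma$. A final backward integration by parts on the $\int_\Omega\beta_+\nabla u\cdot\nabla\varphi$ term, as in the last step of the proof of Theorem \ref{thm:cons}, yields (\ref{pb1NoAlgNonCons}).

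\emph{Necessity.} Suppose $(u,\tu)$ solves the split system. I claim $w := u|_{\Omega_-} - \tu \equiv 0$ in $\Omega_-$. Indeed, $w = 0$ on $\gamma$ by (\ref{pb2NoAlgNonCons}); and testing (\ref{pb1NoAlgNonCons}) against $\varphi\in C_0^\infty(\Omega_-)$, for which the $\delta_\gamma$ contribution vanishes, and comparing with the identity above applied to $\tu$, one finds that every right-hand-side term cancels against the corresponding term obtained by integrating $\int_{\Omega_-}\beta_+\nabla\tu\cdot\nabla\varphi$ by parts. This leaves
\begin{equation*}
\int_{\Omega_-}\beta_+\nabla w\cdot\nabla\varphi = 0 \qquad \forall\,\varphi\in C_0^\infty(\Omega_-),
\end{equation*}
and uniqueness for the homogeneous Dirichlet problem $-\nabla\cdot(\beta_+\nabla w) = 0$ in $\Omega_-$ with $w=0$ on $\gamma$ gives $w\equiv 0$. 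Consequently $-\nabla\cdot(\beta\nabla u) = f$ in $\Omega\setminus\gamma$, while the $\delta_\gamma$ term in (\ref{pb1NoAlgNonCons}) reads, verbatim as at the end of the proof of Theorem \ref{thm:cons}, as $\jump{\beta_+\nabla u\cdot\nn} = (\beta_+-\beta_-)\partial u/\partial\nn|_{\Omega_-}$, which rearranges to $\jump{\beta\nabla u\cdot\nn}=0$.

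\emph{Main difficulty.} The only genuinely new issue is rigorously interpreting $\nabla\cdot(\beta_+\nabla\tu)$ when $\beta_+$ is not smooth enough for $\beta_+\Delta\tu$ to be directly defined. The hypothesis $\beta_+/\beta_-\in H^1(\Omega_-)$ is tailored exactly so that the rewriting via $\beta_-\nabla(\beta_+/\beta_-)$ produces an $L^2$ quantity; together with $\beta_-\neq 0$ this guarantees that all the volumetric source terms appearing in (\ref{pb1NoAlgNonCons}) belong to $H^{-1}(\Omega)$, so that the split problems are well-posed and the manipulations above are legitimate. Aside from this bookkeeping, the proof is essentially a term-by-term generalization of the constant-coefficient argument.
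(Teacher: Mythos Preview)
Your proof is correct and follows essentially the same approach as the paper's appendix proof: split the weak form over $\Omega_+$ and $\Omega_-$, derive the identity $-\nabla\cdot(\beta_+\nabla\tu) = (\beta_+/\beta_-)f + (\beta_+/\beta_-)\nabla\tu\cdot\nabla\beta_- - \nabla\tu\cdot\nabla\beta_+$ in $\Omega_-$ (which the paper isolates as a separate lemma), integrate by parts, and for necessity show that $w=u-\tu$ solves the homogeneous Dirichlet problem for $-\nabla\cdot(\beta_+\nabla\,\cdot\,)$ in $\Omega_-$. Your derivation of the key identity via the factorization $\beta_+\nabla\tu=(\beta_+/\beta_-)(\beta_-\nabla\tu)$ is slightly more direct than the paper's two-lemma route through $\beta_+\Delta\tu$, and in the necessity direction the paper makes the role of the hypothesis $\beta_+/\beta_-\in H^1(\Omega_-)$ explicit by testing the local weak form against $(\beta_+/\beta_-)\varphi\in H_0^1(\Omega_-)$, but the substance is identical.
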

\begin{theorem}\label{thm:nonconsUnsteady}
A temperature distribution solves Problem (\ref{Model2}) for $\beta_- \neq 0 \in \Omega_-$, \newline $\frac{\beta_+}{\beta_-} \in H^1(\Omega_-)$, $\frac{\rho_-\beta_+}{\beta_-} \in H^1(\Omega_-)$ if and only if the corresponding solution pair $(u, \tu)$ solves Problems (\ref{pb1NoAlgNonConsUnst}) and (\ref{pb2NoAlgNonConsUnst}). 	
\end{theorem}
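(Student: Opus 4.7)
The plan is to follow essentially the same two-step strategy as in the proofs of Theorems \ref{thm:cons} and \ref{thm:cons2}, but now carefully tracking the additional volumetric terms that appear because $\beta_\pm$ and $\rho_\pm$ are no longer constant, so that the product rule produces genuine contributions from $\nabla \beta_\pm$ and the identification $-\beta_- \Delta \widetilde u = f - \rho_-\partial_t \widetilde u$ is no longer valid as stated. The regularity hypotheses $\beta_+/\beta_- \in H^1(\Omega_-)$ and $\rho_-\beta_+/\beta_- \in H^1(\Omega_-)$ are precisely what will be needed to give meaning to these new terms in $H^{-1}(\Omega_-)$, and should be used only at the very end to justify the final backward integration.

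For the forward direction, I assume $u$ solves Problem (\ref{Model2}) and set $\widetilde u := u|_{\Omega_-}$, so that (\ref{pb2NoAlgNonConsUnst}) is satisfied by construction. Testing the weak form of (\ref{Model2}) against $\varphi \in C_0^\infty(\Omega)$, I split the integrals over $\Omega$ using the add/subtract decomposition already employed in (\ref{proofStep2Unst}). The only new point occurs when integrating by parts the terms $-\int_{\Omega_-} \beta_+ \nabla \widetilde u \cdot \nabla \varphi + \int_{\Omega_-} \beta_- \nabla \widetilde u \cdot \nabla \varphi$: applying the product rule gives
\begin{equation}
\nabla \cdot (\beta_\pm \nabla \widetilde u) = \beta_\pm \Delta \widetilde u + \nabla \beta_\pm \cdot \nabla \widetilde u,
\end{equation}
and from (\ref{pb2NoAlgNonConsUnst}) one has $\beta_- \Delta \widetilde u + \nabla\beta_-\cdot\nabla \widetilde u = \rho_- \partial_t \widetilde u - f$. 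Solving for $\Delta \widetilde u$ and substituting into the expression for $\nabla\cdot(\beta_+\nabla \widetilde u)$ yields
\begin{equation}
\nabla \cdot (\beta_+ \nabla \widetilde u) = \frac{\rho_-\beta_+}{\beta_-}\partial_t \widetilde u - \frac{\beta_+}{\beta_-}f - \frac{\beta_+}{\beta_-}\nabla \beta_-\cdot\nabla \widetilde u + \nabla \beta_+\cdot\nabla \widetilde u.
\end{equation}
Combining with the corresponding expression for $\nabla\cdot(\beta_-\nabla \widetilde u)$, collecting the $\Omega_-$ volumetric contributions, and keeping the boundary integrals on $\gamma$ (which give the $\delta_\gamma$ surface source exactly as in the constant-coefficient case) reproduces the right-hand side of (\ref{pb1NoAlgNonConsUnst}) after a backward integration in $\Omega$.

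For the converse direction, I follow verbatim the argument used at the end of the proof of Theorem \ref{thm:cons2}: defining $w := u - \widetilde u$ on $\Omega_-$, the difference of Problem (\ref{pb1NoAlgNonConsUnst}) restricted to $\Omega_-$ and Problem (\ref{pb2NoAlgNonConsUnst}) cancels the extra source terms (they depend only on $\widetilde u$), and leaves a homogeneous parabolic equation with $\rho_+$ and $\beta_+$ coefficients, zero Dirichlet data on $\gamma$, and zero initial data; by uniqueness $w\equiv 0$, so $u = \widetilde u$ in $\Omega_-$. Substituting $\widetilde u = u|_{\Omega_-}$ back into (\ref{pb1NoAlgNonConsUnst}) and interpreting the $\delta_\gamma$ term as a jump condition yields (\ref{Model2}) together with $\jump{\beta\nabla u \cdot \nn}=0$ on $\gamma$ as in Theorem \ref{thm:cons2}.

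I expect the main technical obstacle to be strictly a bookkeeping one: making sure that the products $(\beta_+/\beta_-)\nabla \beta_-\cdot\nabla \widetilde u$ and $(\rho_-\beta_+/\beta_-)\partial_t \widetilde u$ are genuinely elements of $H^{-1}(\Omega_-)$ under the stated hypotheses, so that the identities above hold in the distributional sense and the backward integration leading to (\ref{pb1NoAlgNonConsUnst}) is legitimate. The hypothesis $\beta_+/\beta_- \in H^1(\Omega_-)$ ensures the spatial term is at least the product of an $L^\infty$-into-$L^2$ multiplier and a gradient in $L^2$, while $\rho_-\beta_+/\beta_- \in H^1(\Omega_-)$ plays the analogous role for the time-derivative correction. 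No new analytical ideas beyond those already appearing in Theorems \ref{thm:cons} and \ref{thm:cons2} are required.
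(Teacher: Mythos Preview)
Your proposal is correct and follows essentially the same route as the paper's proof: the forward direction uses the add/subtract decomposition, the product-rule identity $\nabla\cdot(\beta_+\nabla\widetilde u) = \frac{\rho_-\beta_+}{\beta_-}\partial_t\widetilde u - \frac{\beta_+}{\beta_-}f - \frac{\beta_+}{\beta_-}\nabla\beta_-\cdot\nabla\widetilde u + \nabla\beta_+\cdot\nabla\widetilde u$ (this is exactly the paper's identity (\ref{proofStep4FINAL}) derived in the same way), and integration by parts on $\Omega_-$; the converse direction reduces to the homogeneous $(\rho_+,\beta_+)$ parabolic problem (\ref{verifPbmK}) for $u-\widetilde u$ and invokes uniqueness. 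If anything, you are slightly more explicit than the paper about where the hypotheses $\beta_+/\beta_-,\ \rho_-\beta_+/\beta_-\in H^1(\Omega_-)$ enter.
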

Proofs of Theorems \ref{thm:nonconsSteady} and \ref{thm:nonconsUnsteady} are similar to those shown previously but somewhat more involved. They are provided in an appendix at the end of this work.
\section{Two-Level Method}

 We will now use the coupled formulations (\ref{pb1NoAlg})-(\ref{pb2NoAlg}) and (\ref{pb1NoAlgUnst})-(\ref{pb2NoAlgUnst}) introduced in the previous section to derive two-level methods for solving the steady (\ref{Model1}) and unsteady (\ref{Model2}) thermal problems.
 \par Focusing initially on steady problems, we propose to solve (\ref{pb1NoAlg}) and (\ref{pb2NoAlg}) adopting an iterative scheme using an approach similar to the \textcolor{black}{one proposed in \cite{BMM2005, BIM2011, Maury2001}} and detailed in the following algorithm:
\begin{alg}\label{TwoLevelHeat}
The two-level algorithm for Problem (\ref{Model1}) is given by: \\ \ \\
Step 1: Obtain initial temperature distribution $u^0$ by solving on the entire domain $\Omega$: 
\begin{align}\begin{split}\label{pb1}
	-\nabla \cdot \left(\beta_{+} \nabla u^0 \right) &= f \text{ in } \Omega \\
	u^0 &= T_0 \text{  on  } \Gamma_D \\
	\beta_{+} \frac{\partial u^0}{\partial \nn} &= q \text{  on  } \Gamma_N
	\end{split}\end{align}
Step k\textcolor{black}{: perform the following 4 steps:} \\

k.1 Obtain intermediate temperature distribution $\widetilde{u}^k$ by solving on the subdomain $\Omega_-$:
\begin{align}\begin{split}\label{pb2}
	-\nabla \cdot \left(\beta_{-} \nabla \widetilde{u}^k \right) &= f \text{  in  } \Omega_{-} \\
	\widetilde{u}^k &= u^{k-1} \text{  on  } \gamma \\
	\end{split}\end{align} 

k.2 Obtain the temperature distribution $\widehat{u}^k$ by solving on the entire domain $\Omega$: 
\begin{align}\begin{split}\label{pb3}
	-\nabla \cdot \left(\beta_{+} \nabla \widehat{u}^k \right)  &= f\big|_{\Omega_+} + \frac{\beta_+}{\beta_-}f\big|_{\Omega_-} + \left\lbrack \beta_{+} \frac{\partial \widetilde{u}^k }{\partial \nn} \delta_{\gamma} - \beta_{-} \frac{\partial \widetilde{u}^k}{\partial \nn} \delta_{\gamma} \right\rbrack  \text{  in  } \Omega \\
	\widehat{u}^k &= T_0 \text{  on  } \Gamma_D \\
	\beta_{+} \frac{\partial \widehat{u}^k}{\partial \nn} &= q \text{  on  } \Gamma_N
	\end{split}\end{align}

k.3 \textcolor{black}{Perform a }relaxation step to obtain a temperature distribution $u^k$:
\begin{align}\label{relax}
	u^k = \theta \widehat{u}^k + \left(1-\theta \right)u^{k-1}, \quad \theta \in (0, \,1\rbrack.
\end{align}

k.4 Check convergence. If met, terminate iteration. Otherwise repeat step k.
\end{alg}
We note that the influence of $\beta_-$ is introduced by the forcing term in step k.2. 
\par Under-relaxation is used here, as this algorithm \textcolor{black}{is iterative in nature and} can suffer from instability, particularly when $\beta_- >> \beta_+$. The convergence analyses for the related methods shown in \cite{BMM2005, BIM2011, Maury2001} suggest that the relaxation step k.3 is important for stability and convergence, with a way to optimally select $\theta$ under specific assumptions demonstrated in \cite{Maury2001}. A similar analysis for Algorithm \ref{TwoLevelHeat} is an important subject for future work.
\par For the unsteady problem (\ref{Model2}), we \textcolor{black}{introduce a } time-discrete algorithm\textcolor{black}{, based on a backward Euler method}. \textcolor{black}{Since at each time step we must use an iterative scheme, to avoid confusion, we use the index $n$ to denote the time steps and $k$ to indicate the iterates within a time step. With this notation, the algorithm reads:}  \begin{alg}\label{TwoLevelHeatUnst}
The two-level algorithm for Problem (\ref{Model2}) at a time step $t_n$ is given by: \\ \ \\
Step 1: Obtain initial temperature distribution $u_n^0$ at time $t_n$ by solving on the entire domain $\Omega$: 
\begin{align}\begin{split}\label{pb1}
 \frac{\rho_+}{\Delta t}\left(u_n^0 - u_{n-1} \right) 	-\nabla \cdot \left(\beta_{+} \nabla u_n^0 \right) &= f_n \text{  in  } \Omega \\
	u_n^0 &= T_0 \text{  on  } \Gamma_D \\
	\beta_{+} \frac{\partial u_n^0}{\partial \nn} &= q_n \text{  on  } \Gamma_N
	\end{split}\end{align}
Step k consists of the following 4 steps: \\

k.1 Obtain intermediate temperature distribution $\widetilde{u}^k$ by solving on the subdomain $\Omega_-$:
\begin{align}\begin{split}\label{pb2}
\frac{\rho_-}{\Delta t}\left(\widetilde{u}_n^k - u_{n-1} \right)	-\nabla \cdot \left(\beta_{-} \nabla \widetilde{u}_n^k \right) &= f_n \text{  in  } \Omega_{-} \\
	\widetilde{u}_n^k &= u_n^{k-1} \text{  on  } \gamma \\
	\end{split}\end{align} 

k.2 Obtain the temperature distribution $\widehat{u}_n^k$ by solving on the entire domain $\Omega$: 
\begin{align}\begin{split}\label{pb3}
\MoveEqLeft[1] 
\frac{\rho_+}{\Delta t}\left(\widehat{u}_n^k - u_{n-1} \right)	-\nabla \cdot \left(\beta_{+} \nabla \widehat{u}_n^k \right) \\ \quad{}&  = f_n\big|_{\Omega_+} + \frac{\beta_+}{\beta_-}f_n\big|_{\Omega_-} + \left\lbrack \beta_{+} \frac{\partial \widetilde{u}_n^k }{\partial \nn} \delta_{\gamma} - \beta_{-} \frac{\partial \widetilde{u}_n^k}{\partial \nn} \delta_{\gamma} \right\rbrack \\ \quad{}& + \left(\rho_+ -\frac{\rho_- \beta_+}{\beta_-}\right) \left\lbrack\frac{1}{\Delta t}\left(\widetilde{u}_n^k - u_{n-1} \right)\bigg|_{\Omega_-}\right\rbrack  \text{  in  } \Omega \\
	\widehat{u}_n^k &= T_0 \text{  on  } \Gamma_D \\
	\beta_{+} \frac{\partial \widehat{u}_n^k}{\partial \nn} &= q_n \text{  on  } \Gamma_N
	\end{split}\end{align}

k.3 \textcolor{black}{Perform a relaxation step to obtain the temperature distribution} $u_n^k$:
\begin{align}\label{relax}
	u_n^k = \theta \widehat{u}_n^k + \left(1-\theta\right)u_n^{k-1}, \quad \theta \in (0, \,1\rbrack.
\end{align}

k.4 Check convergence. If met, terminate iteration. Otherwise repeat step k.
\end{alg}

\section{Finite Element formulations}
We now present the algorithms outlined in Section 3 in variational formulations suitable for finite element analysis. Define $\mathcal{T}$ and $\mathcal{T}_{-}$ to be discretizations of $\Omega$ and $\Omega_-$, respectively, with $\XOM$ and $\XOMM$ suitable function spaces. In general $\mathcal{T}$ and $\mathcal{T}_-$ have different resolutions and need not be conformal to each other. For ease of exposition, we will refer to the discretization $\mathcal{T}$ of $\Omega$ as `global' and denote its mesh size as $h$, while we will refer to the discretization $\mathcal{T}_-$ of $\Omega_-$ as `local,' denoting its mesh size as $h_-$.
 \par The variational formulation of Algorithm \ref{TwoLevelHeat} is given by:
\begin{alg}\label{TwoLevelHeatVar}
Step 1: Find $u_h^0$ $\in$ $\XTO$ such that for all $v_h$ $\in$ $\XTO$:
\begin{align}\begin{split}\label{pb1Var}
\beta_{+} \int_{\Omega} \nabla u_h^0 \cdot \nabla v_h  &= \int_{\Omega} f \, v_h + \int_{\Gamma_N} q \, v_h \emph{	in	} \Omega\\
u_h^0 &= T_0 \quad \emph{  on   } \Gamma_D
\end{split}\end{align}
Step k: \\

k.1 Find $\widetilde{u}_h^k$ $\in$ $\XTT$ such that for all $w_h$ $\in$ $\XTT$:
\begin{align}\begin{split}\label{pb2Var}
	\beta_- \int_{\Omega_-} \nabla \widetilde{u}_h^k \cdot \nabla w_h &= \int_{\Omega_-} f \, w_h \emph{	in	} \Omega_{-} \\
	\widetilde{u}_h^k &= u_h^{k-1} \emph{	on	} \gamma\end{split}\end{align} 

k.2 Find $\widehat{u}_h^k$ $\in$ $\XTO$ such that for all $v_h$ $\in$ $\XTO$: 
\begin{align}\begin{split}\label{pb3Var}
\beta_{+} \int_{\Omega} \nabla \widehat{u}_h^k \cdot \nabla v_h  &= \int_{\Omega_+} f \, v_h + \frac{\beta_+}{\beta_-}\int_{\Omega_-} f \, v_h + \int_{\Gamma_N} q \, v_h + \int_{\gamma} \left( \beta_+ \frac{\partial \widetilde{u}_h^{k} }{\partial \nn} - \beta_- \frac{\partial \widetilde{u}_h^{k} }{\partial \nn} \right) v_h  \emph{	in	} \Omega \\
\widehat{u}_h^k &= T_0 \emph{ on } \Gamma_D
\end{split}\end{align}

k.3 Relaxation step to obtain \textcolor{black}{the} temperature distribution $u^k$:
\begin{align}\label{relax}
	u_h^k = \theta \widehat{u}_h^k + \left(1-\theta\right)u_h^{k-1}, \quad \theta \in (0, \,1\rbrack.
\end{align}

k.4 Check convergence. If met, terminate iteration. Otherwise repeat step k.	
\end{alg}
The variational formulation of (\ref{TwoLevelHeatUnst}) is similarly given by:
 \begin{alg}\label{TwoLevelHeatVarUnst}
Step 1: Find $u_{h,n}^0$ $\in$ $\XTO$ such that for all $v_h$ $\in$ $\XTO$:
\begin{align}\begin{split}\label{pb1VarU}
\frac{\rho_{+}}{\Delta t} \int_{\Omega} \left(u_{h,n}^0 - u_{h,n-1}\right)  + \beta_{+} \int_{\Omega} \nabla u_{h,n}^0 \cdot \nabla v_h  &= \int_{\Omega} f \, v_h + \int_{\Gamma_N} q \, v_h \emph{	in	} \Omega\\
u_{h,n}^0 &= T_0 \,\, \emph{on } \Gamma_D
\end{split}\end{align}
Step k: \\

k.1 Find $\widetilde{u}_{h,n}^k$ $\in$ $\XTT$ such that for all $w_h$ $\in$ $\XTT$:
\begin{align}\begin{split}\label{pb2VarU}
\frac{\rho_{-}}{\Delta t} \int_{\Omega_-} \left(\widetilde{u}_{h,n}^k - u_{h,n-1}\right) +	\beta_- \int_{\Omega_-} \nabla \widetilde{u}_{h,n}^k \cdot \nabla w_h &= \int_{\Omega_-} f \, w_h \emph{  in  } \Omega_{-} \\
	\widetilde{u}_{h,n}^k &= u_{h,n}^{k-1} \emph{ on } \gamma
	\end{split}\end{align} 

k.2 Find $\widehat{u}_h^k$ $\in$ $\XTO$ such that for all $v_h$ $\in$ $\XTO$: 
\begin{align}\begin{split}\label{pb3VarU}
\MoveEqLeft[1] 
\frac{\rho_+}{\Delta t} \int_{\Omega} \left(\widehat{u}_{h,n}^k  - u_{h,n-1} \right)v_h +   \beta_{+} \int_{\Omega} \nabla \widehat{u}_{h,n}^k \cdot \nabla v_h \\ \quad{}&  = \int_{\Omega_+} f \, v_h + \frac{\beta_+}{\beta_-}\int_{\Omega_-} f \, v_h + \int_{\Gamma_N} q \, v_h + \int_{\gamma} \left( \beta_+ \frac{\partial \widetilde{u}_{h,n}^{k} }{\partial \nn} - \beta_- \frac{\partial \widetilde{u}_{h,n}^{k} }{\partial \nn} \right) v_h  \\ \quad{}& + \left(\rho_+ -\frac{\rho_-\beta_+}{\beta_-}\right) \frac{1}{\Delta t} \int_{\Omega_-} \left(\widetilde{u}_{h,n}^k  - u_{h,n-1} \right)v_h \emph{	in	} \Omega \\
\widehat{u}_{h,n}^k &= T_0 \emph{ on } \Gamma_D
\end{split}\end{align}

k.3 Relaxation step to obtain final temperature distribution $u_n^k$:
\begin{align}\label{relax}
	u_{h,n}^k = \theta \widehat{u}_{h,n}^k + \left(1-\theta\right)u_{h,n}^{k-1}, \quad \theta \in (0, \,1\rbrack.
\end{align}

k.4 Check convergence. If met, terminate iteration. Otherwise repeat step k.	
\end{alg}

\par \textbf{Remark: Note on boundary conditions. } We briefly address a practical concern regarding the implementation of Neumann boundary conditions when using the Algorithms \ref{TwoLevelHeatVar} and \ref{TwoLevelHeatVarUnst}. For problems where $\Gamma_N \cap \gamma \neq \emptyset $, at first glance the Neumann boundary conditions \textcolor{black}{in} (\ref{pb3Var}) and (\ref{pb3VarU}) may appear incorrect as we have:
\begin{align}\label{BCConundrum1}
\beta_{+} \frac{\partial u}{\partial \nn} &= q \text{  on  } \Gamma_N
\end{align}
while the corresponding condition \textcolor{black}{in} (\ref{Model1}) is:
\begin{align}\label{BCConundrum2}
\beta_{-} \frac{\partial u}{\partial \nn} &= q \text{  on  } \Gamma_N.
\end{align}
\par One recalls quickly that for Problem (\ref{Model1})\footnote{The same argument holds for (\ref{Model2}) without any loss of generality.}, a Neumann boundary condition is enforced naturally by the variational formulation. For generic function space $V$, after integration by parts the variational formulation of (\ref{Model1}) is given by: Find $u \in V$ such that for all $\varphi \in V$:
\begin{align}\begin{split}\label{BCConundrum4}
\beta \int_{\Omega}  \nabla u \cdot \nabla \varphi - \beta \int_{\Gamma_N}  \frac{\partial u}{\partial \nn} \varphi &= \int_{\Omega} f \varphi \emph{	in	} \Omega \\
u &= T_0 \text{ on } \Gamma_D.
\end{split}\end{align}
The \textcolor{black}{Neumann boundary} condition:
 \textcolor{black}{\begin{align}\label{NeumanBC}\beta\, \frac{\partial u}{\partial \nn} = q \end{align}}is then enforced by simply replacing \textcolor{black}{$\beta\displaystyle\frac{\partial u}{\partial \nn}$} in (\ref{BCConundrum4}) with $q$, giving the problem:
Find $u \in V$ such that for all $\varphi \in V$:
\begin{align}\begin{split}\label{BCConundrum5}
\beta \int_{\Omega}  \nabla u \cdot \nabla   \varphi -  \int_{\Gamma_N} q \, \varphi &= \int_{\Omega} f \varphi  \\
u &= T_0 \text{ on } \Gamma_D \end{split}\end{align}
 We now turn our attention to (\ref{pb3Var}) and (\ref{pb3VarU}). Let $\widetilde{\gamma} = \gamma\backslash\Gamma_N$, hence $\gamma = \widetilde{\gamma} \cup \Gamma_N$. On the right hand side of (\ref{pb3Var}) one then has the boundary integrals (the same argument also applies for (\ref{pb3VarU})):
\begin{align}\begin{split}\label{BCConundrum6}
	\int_{\Gamma_N} q v_h + \beta_+ \int_{\Gamma_N} \frac{\partial \widetilde{u}_h^k}{\partial \nn}v_h - \beta_- \int_{\Gamma_N} \frac{\partial \widetilde{u}_h^k}{\partial \nn}v_h + \beta_+ \int_{\widetilde{\gamma}} \frac{\partial \widetilde{u}_h^k}{\partial \nn}v_h - \beta_- \int_{\widetilde{\gamma}} \frac{\partial \widetilde{u}_h^k}{\partial \nn}v_h
\end{split}\end{align}
By the same reasoning as in (\ref{BCConundrum5}), \textcolor{black}{we enforce} the condition:
\begin{align}\begin{split}\label{BCConundrum7}
-\int_{\Gamma_N} \beta_+ \frac{\partial u_h^k}{\partial \nn} v_h  &= -\int_{\Gamma_N} \beta_+ \frac{\partial \widetilde{u}_h^k}{\partial \nn} v_h +\int_{\Gamma_N} \beta_- \frac{\partial \widetilde{u}_h^k}{\partial \nn} v_h	- \int_{\Gamma_N} q v_h
\end{split}\end{align}
As $\widetilde{u}_h^k = u_h^{k-1}$ on $\gamma$, for a convergent scheme and $k$ sufficiently large, one has $\widetilde{u}_h^k \approx u_h^k$, and (\ref{BCConundrum7}) reduces to :
\begin{align}\begin{split}\label{BCConundrum8}
- \int_{\Gamma_N} \beta_- \frac{\partial \widetilde{u}_h^k}{\partial \nn} v_h  &= 	- \int_{\Gamma_N} q v_h,
\end{split}\end{align}
 consistent with (\ref{BCConundrum2})\footnote{We acknowledge an abuse of notation, as at the continuous level all integrals over $\Gamma_N$ are zero since $m\left(\Gamma_N\right)=0$. However, as this is not the case at the discrete level, and as this is of significant practical interest for those wishing to implement the outlined methods, we feel this presentation is justified in the interest of clarity.}.

\section{Numerical Examples}
We now seek to demonstrate the applicability of the discussed methods with a pair of two-dimensional numerical problems designed to test different aspects of the proposed approach. For both problems, we will refer to comparison solutions computed by solving the equations in a standard (i.e., no two-level treatment) way on a mesh $\mathcal{T}$ as \textit{monolithic} solutions. 
\par We investigate two problems:

\begin{enumerate}
\item \textbf{Steady Additive-type Problem.} \textcolor{black}{A steady-state problem inspired by additive manufacturing for which $\displaystyle{\gamma \cap \partial  \Omega \neq \emptyset}$. In this test we investigate the relationship between the global mesh size $h$ and the local mesh size $h_-$. In particular, for  fixed values of $h$, we seek to observe the effect of refining $h_-$ on the error behavior.}
\item \textbf{Unsteady Additive-type Problem.} \textcolor{black}{An unsteady problem adopting the same geometrical configuration as the previous test. Again, we will test several different refinement levels of $h_-$ for given $h$. We are interested in observing the behavior of the method for unsteady problems in terms of both accuracy and temporal stability.} 
 
 \end{enumerate}
\subsection{Steady Additive-type Problem}
In this example we solve a steady thermal problem with the configuration  shown in Fig. \ref{fig:TestOneGeo}. We set $H=1.0$, $L=1.0$, $H_-=.05$, $\beta_+=1.0$, $\beta_-=20.0$, $T_0=20$, and $q$ defined as:
$$ q = 2000 \text{ exp }\left(-\frac{(.1-x)^2}{.0004} \right). $$ 
\par  The investigated geometry is common in additive manufacturing, where the physics within the thin topmost layer of the problem ($\Omega_-$) is often different than in the remainder of the domain. We note that such problems from are an important potential application for the proposed method, since additive manufacturing is an active area of research \cite{BRDP2018, GMWP2012, HYDY2016,IM2016, KAFHKKR2015, KOCZR2018, PPRZMHBS2015, PPRZMHBS2016, RSM2014,  WLGNMR2017}. 
\par  We seek to observe the relationship between the error, the global mesh level $h$, and the local mesh level $h_-$. In particular, we are interested in observing how the refinement of $h_-$ for a given level of $h$ affects the error behavior. This test is distinct from the one that will follow as it is steady problem; hence we will not consider any temporal effects and their impact on the methodology or the computed solution.
\par We first compute a reference solution $u_{ref}$ on a fine uniform mesh with $h = 1/500$. \textcolor{black}{We compare with computed solutions on three global mesh levels: $h=1/20,\,1/40,$ and $1/80$. For the $h=1/20$ and $h=1/40$ cases, we then compute monolithic solutions without the two-level method treatment as well as solutions using the two-level method for $h_-=1/80,\,/160$, and $1/240$. For the case $h=1/80$, we compute two-level solutions only for $h_-=1/160$ and $h_-=1/240$.} We used $\mathbb{P}^2$ piecewise quadratic finite elements for all simulations.
\begin{figure}
\centering
\includegraphics[scale=.75]{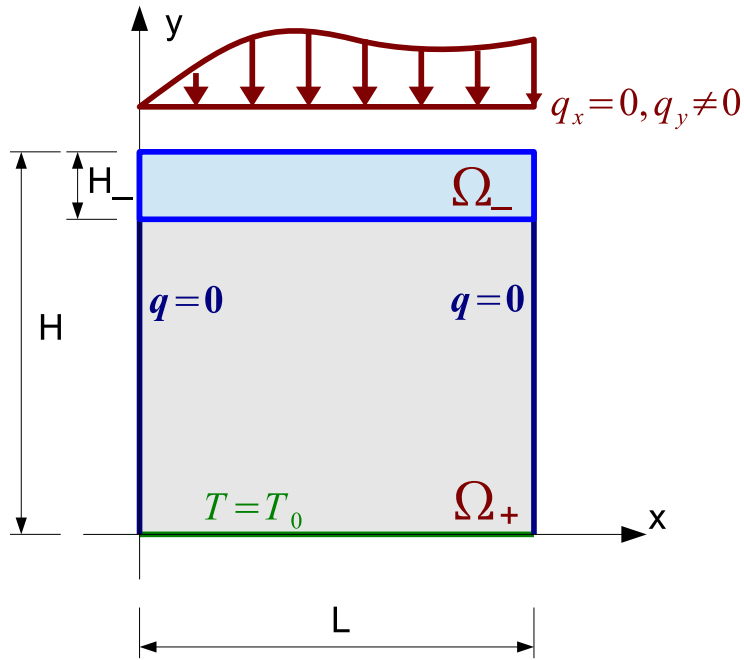}\caption{Steady Additive-type Problem configuration. $\Omega$ consists of a rectangular domain of length $L$ and height $H$, with $\Omega_-$ defined as a rectangular region along the top of $\Omega$ with length $L$ and height $H_-$. A source term $q$ is given along the upper border of $\Omega$ (which is shared by $\Omega_-$), with a fixed temperature $T_0$ prescribed along the lower border. The material properties differ in $\Omega_-$ and $\Omega_+$.}\label{fig:TestOneGeo}
\end{figure}
\par We plot the error compared to the reference solution for each configuration in Fig. \ref{fig:TestOneError}. \textcolor{black}{Note that for each curve, the rightmost point simply corresponds to the solution obtained without using the two-level algorithm.} The plotted figure shows that continued refinement of $h_-$ leads to improved error behavior for each level of $h$. The test demonstrates that one only need refine the local mesh $\mathcal{T}_-$ in order to gain accuracy using this method. Additionally, the test further supports the findings of the previous one, as we again observe that the accuracy improvements do not become less pronounced as we refine the global mesh level.      
\begin{figure}
\centering
\includegraphics[scale=.325]{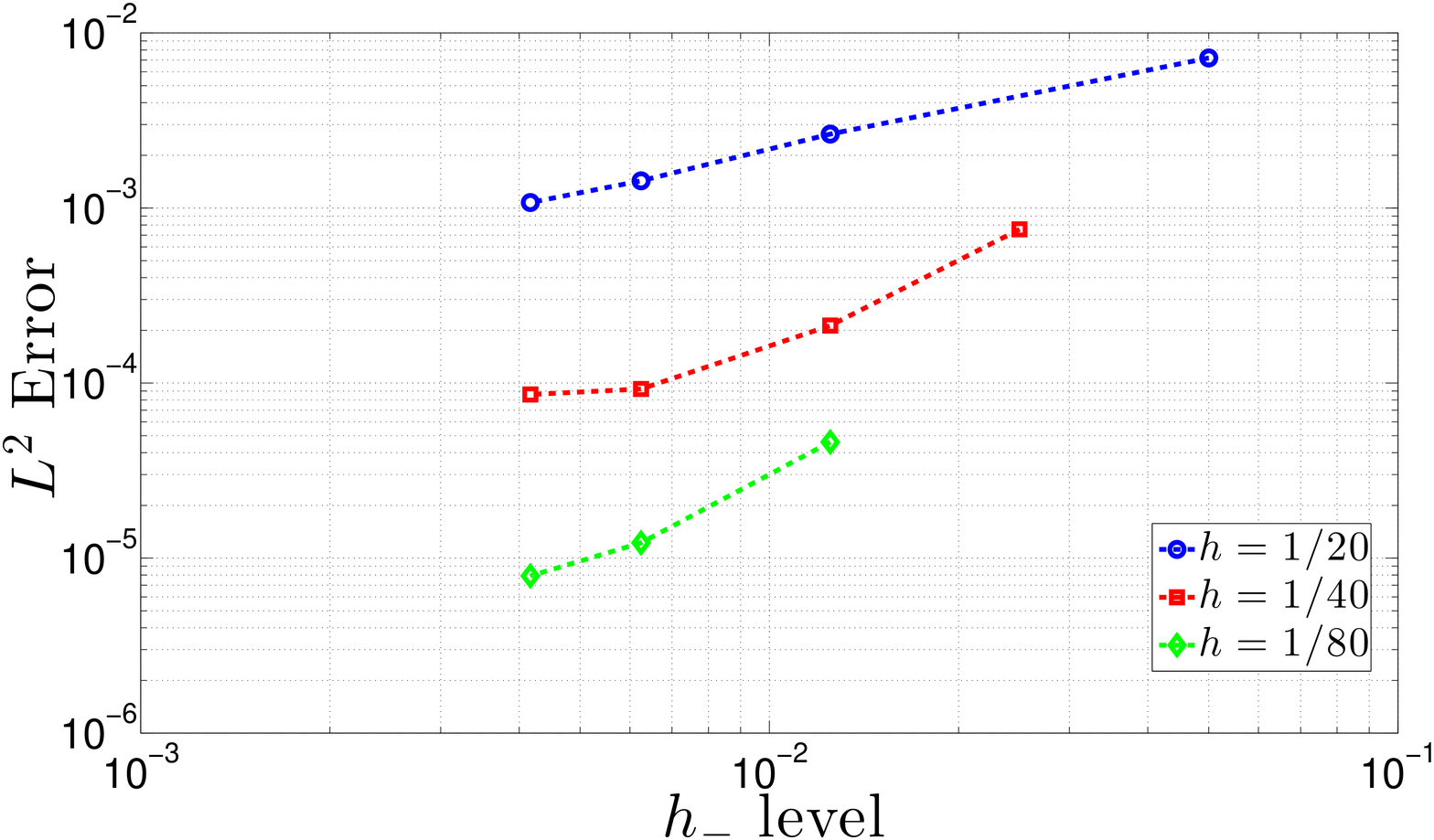}\caption{Steady Additive-type Problem; Relative error in $L^2$ norm as with a reference solution compared to the local mesh size $h_-$. The different lines correspond to different global mesh levels $h$, with the rightmost point on each line representing the monolithic solution. For each global mesh level $h$, we examine the error behavior as we refine the local mesh level $h_-$.}\label{fig:TestOneError}
\end{figure}

\subsection{Unsteady Additive-type Problem}

\par In this example, we apply the time-dependent Algorithm \ref{TwoLevelHeatVarUnst} on a problem inspired by additive manufacturing using a configuration depicted in Fig. \ref{fig:TestThreeGeo}. The problem is designed to model the heating and cooling cycles of a material during the additive process. During each cycle, indicated with the letter $i$, the material undergoes a heating phase in which a laser heats the top layer left-to-right, followed by a cooling phase. \textcolor{black}{Problems in additive manufacturing are of particular interest for the proposed method,} as along the upper portion the physics are generally more involved and require extra attention. This test is similar in its geometry to the previous test; however it is quite different from the physical point of view, as we are now considering an unsteady problem. We seek to analyze the stability and accuracy of the proposed approach when applied to the unsteady setting, and demonstrate its ability to properly resolve temporal effects.

\begin{figure}
\centering
\includegraphics[scale=.75]{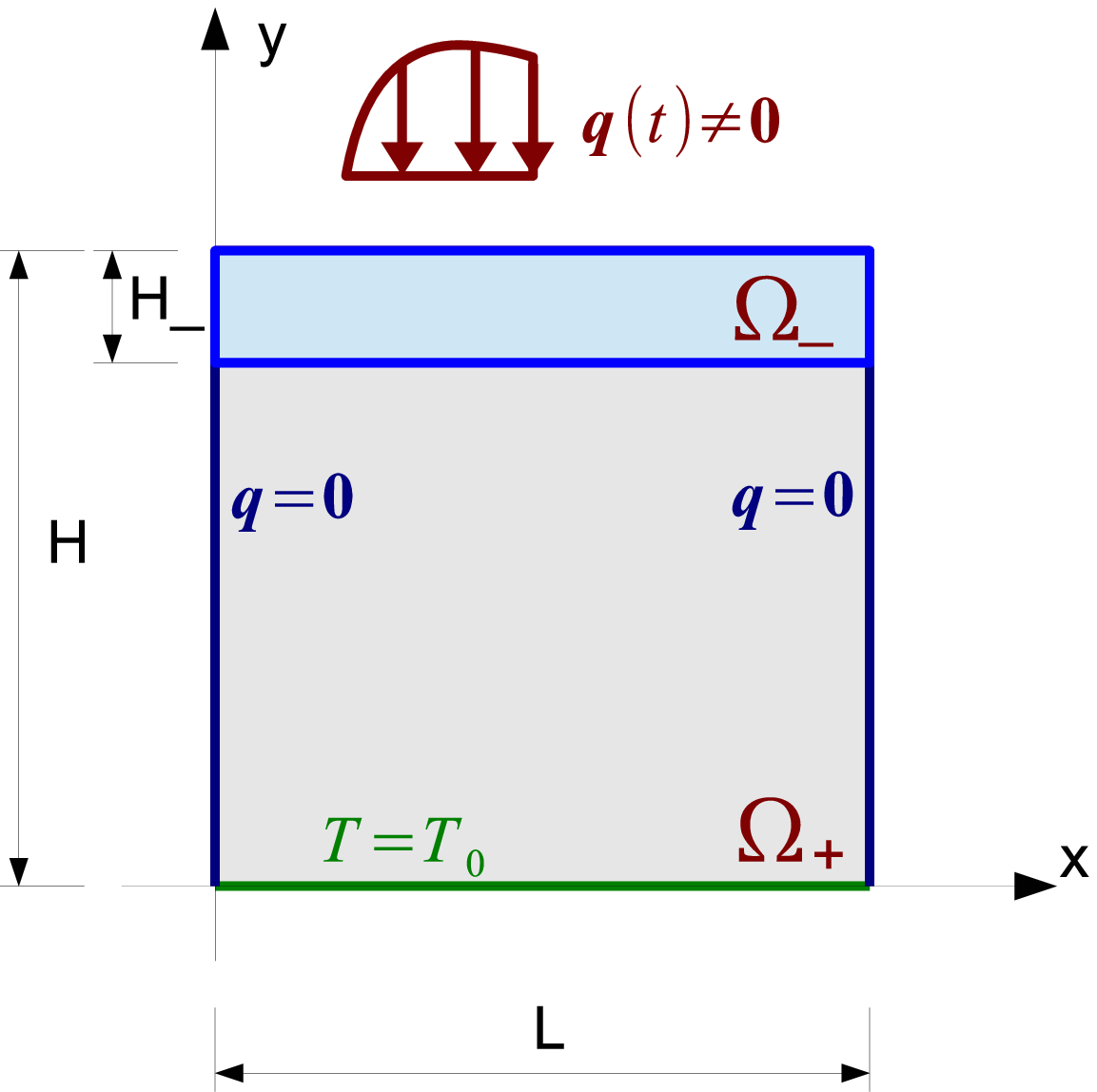}\caption{Unsteady Additive-type Problem configuration. $\Omega$ consists of a rectangular domain of length $L$ and height $H$, with $\Omega_-$ defined as a rectangular region along the top of $\Omega$ with length $L$ and height $H_-$. A time-dependent source term $q(t)$ moves from left-to-right along the upper border of $\Omega$ (which is shared by $\Omega_-$), with a fixed temperature $T_0$ prescribed along the lower border. The material properties differ in $\Omega_-$ and $\Omega_+$.}\label{fig:TestThreeGeo}
\end{figure}

\par  For a heating/cooling cycle $i$, we denote the beginning of the cycle (and hence the heating phase) as $t_{i,0}$, the beginning of the cooling phase as $t_{i,\,cool}$ and the end of the cycle as $t_{i,\,end}$. Using this notation, we define the time-dependent source term as:
\begin{align}\label{UnsteadySource2}
q(x,t,t_{h,0}) &= \begin{cases} 2000\text{ exp }\left(-\frac{ \left(10\left(t-t_{h,0}\right)-x\right)^2 }{.0005}\right) \,\,\,\, \text{for } t \in \, \lbrack t_{i,0},\,t_{i,cool} \rbrack \\
0 \,\,\,\, \text{for } t \in \, \lbrack t_{i,cool},\,t_{i,end} \rbrack. \\
\end{cases}\end{align}
The proposed time-dependent heating source is designed to model the action of a laser heating the top material from left-to-right over a length of $.1$ seconds during a heating cycle, with the cooling cycle lasting $.07$ seconds. This process is illustrated in Fig. \ref{fig:AddMan}.
\par We set $T_0=20$, $\beta_+=1.0$, $\beta_-=20.0$, $\rho_+=1.0$, $\rho_-=5.0$, and $\Delta t=.01$. Our reference solution is computed on uniform fine mesh with $h =1/500$. We then compute four solutions on a uniform global mesh with $h = 1/20$: a monolithic solution, and two-level solutions with $h_-=1/80,\,1/100,\,1/120$. For all simulations, we discretize using $\mathbb{P}^2$ piecewise polynomial finite elements and simulate five full cycles. 

\begin{figure}

\includegraphics[width=\linewidth]{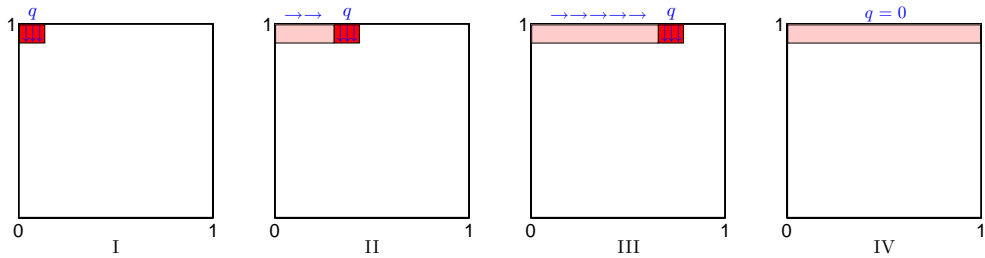}	\caption{Unsteady Additive-type Problem, heating cycle process (left-to-right). In each cycle, the laser then heats the material left-to right (I-III). The laser is then switched off and the material cools (IV).}\label{fig:AddMan}
\end{figure}
We plot two relevant results. Figure \ref{fig:UnsteadyTemp} shows the temperature at different heights in time along the vertical line $x=.5$, showing agreement between the two-level ($h_-=1/120$) and reference solutions. We have colored the plot to indicate the heating and cooling phases. Figure \ref{fig:UnsteadyTempError} shows the relative $L^2$ error in time for the different solutions as compared to the reference. We observe that, although the two-level method does not give significant accuracy increases for $h_-=1/80$, the performance improves noticeably as we refine $h_-$. For $h_-=1/100$ and particularly $h_-=1/120$, we significantly reduce the error when compared to the standard solution. We again emphasize that the same global mesh with $h=1/20$ was used for all simulations, and that the observed improvements result solely from refining the local mesh.

\begin{figure}

\centering

\begin{subfigure}[h]{1.0\linewidth}
\includegraphics[width=\linewidth]{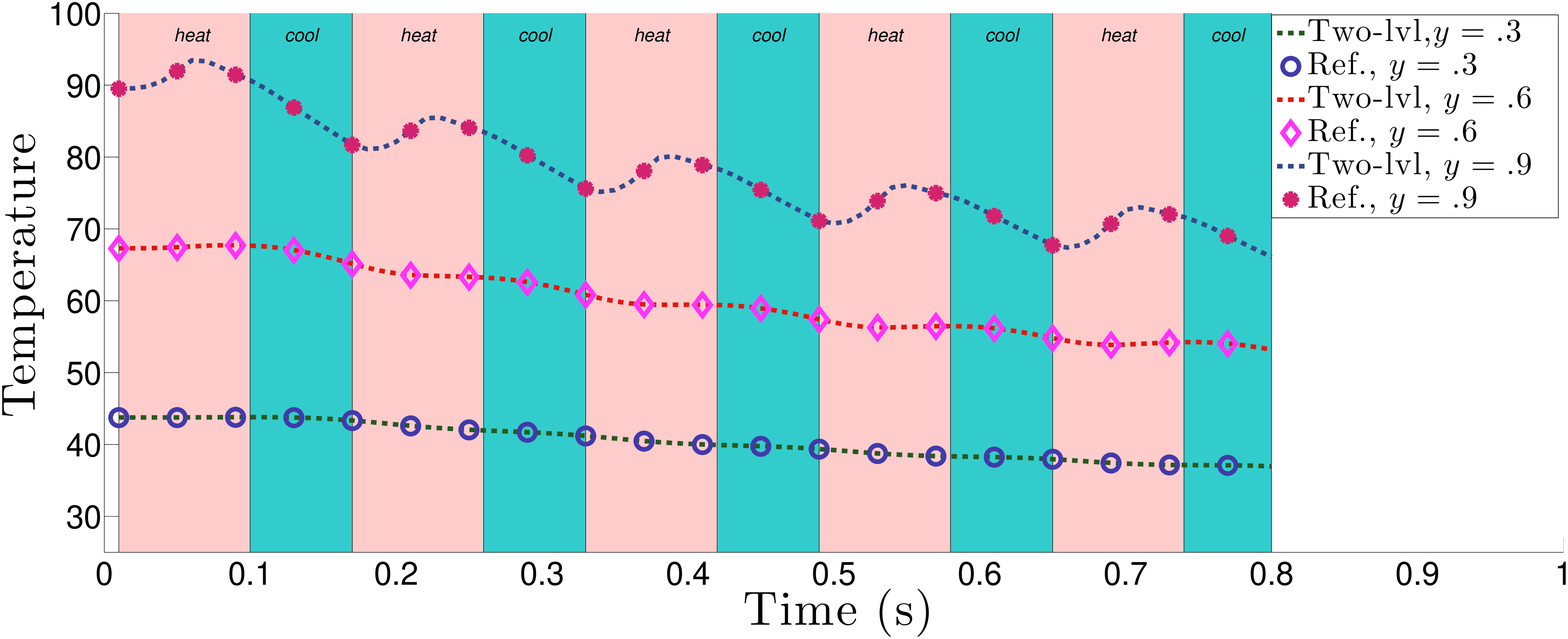}\end{subfigure}\newline
\begin{subfigure}[h]{.4\linewidth}
\includegraphics[width=\linewidth]{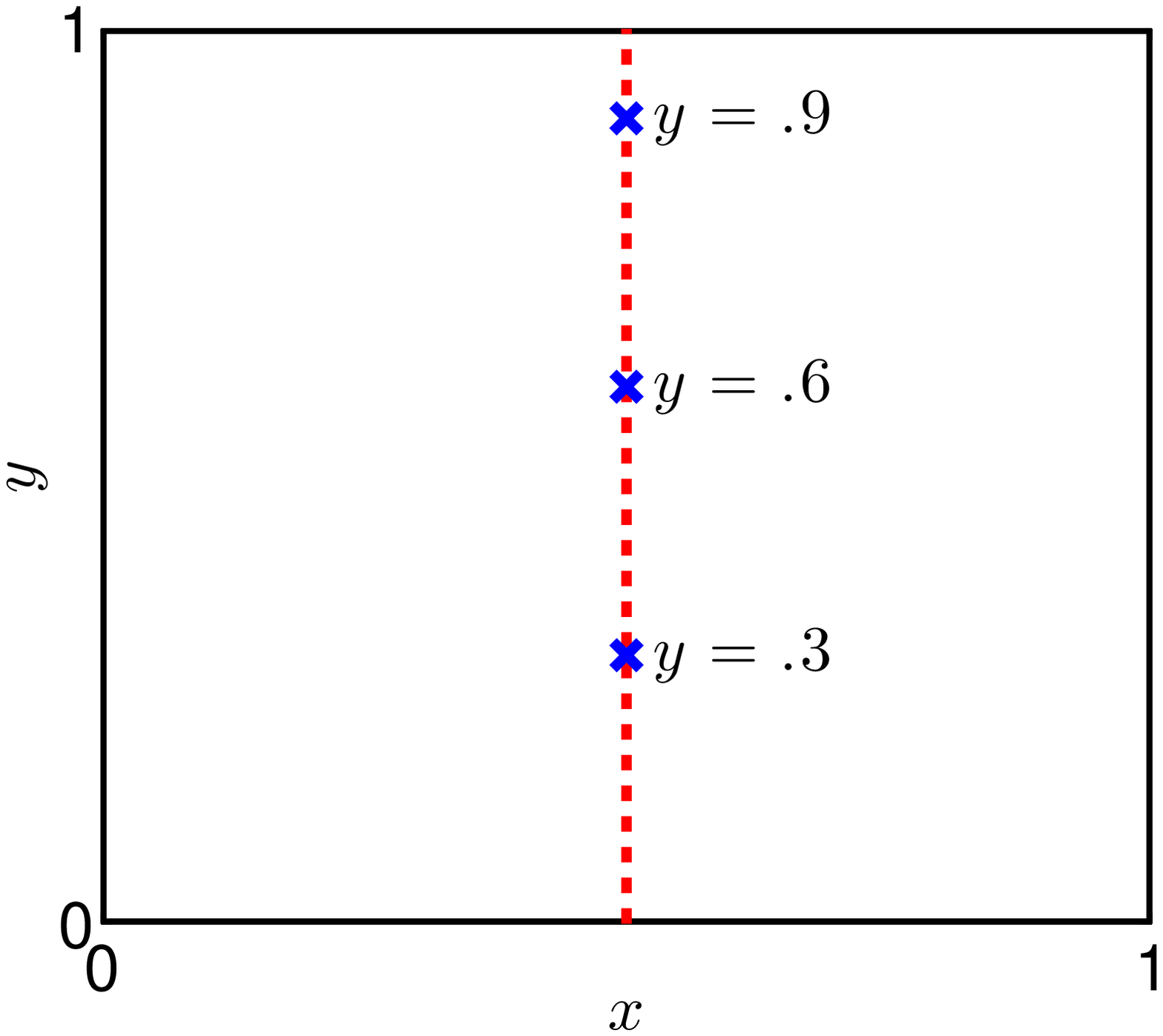}
\end{subfigure}
\caption{Unsteady Additive-type Problem; the temperature in time of the two-level solution (the lines) compared to the reference solution (marker points) evaluated at different points in the domain (bottom).}\label{fig:UnsteadyTemp}
\end{figure}

\begin{figure}
\centering
\includegraphics[scale=.3]{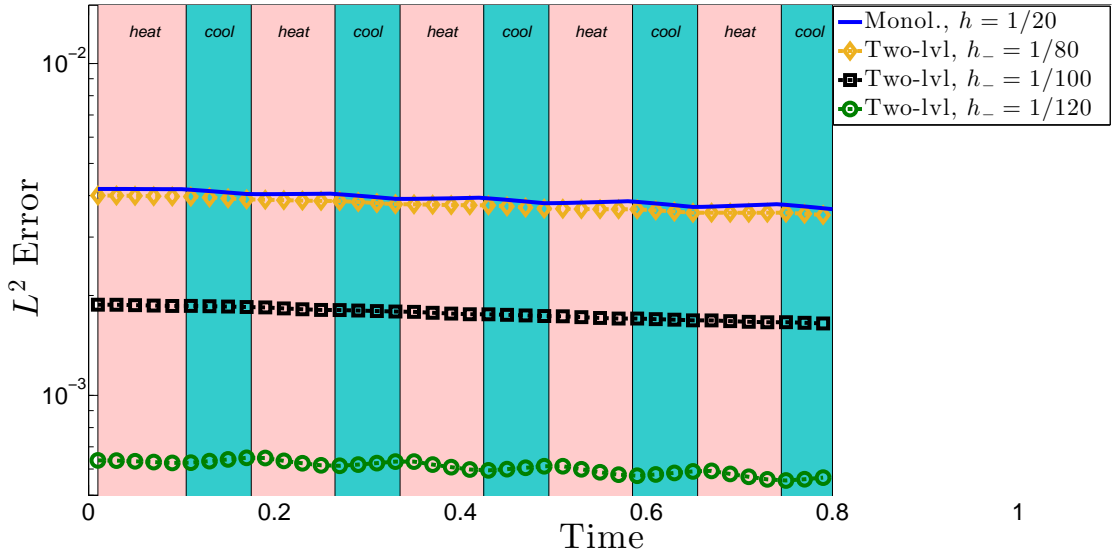}\caption{Unsteady Additive-type Problem; Relative error in $L^2$ norm as compared to a reference solution in time. The global mesh level is the same for each case, with the different lines representing the monolithic solution (blue) and different levels of $h_-$.}\label{fig:UnsteadyTempError}
\end{figure}

\section{Conclusions}
We have introduced a two-level approach for solving partial differential equations, related to the previously introduced Fat boundary method. This method is based on decomposing the problem into a split-problem formulation, then solving the two problems on different meshes in an iterative manner until convergence is achieved. It has shown particular promise for problems with prominent local characteristics, where some phenomena only occur on a (potentially small) portion of the mesh. Problems of this type occur in many areas of industrial interest, including additive manufacturing.
\par The proposed method contains many important and nontrivial mathematical concerns. While we have proved consistency with the original problem, a formal proof of convergence remains missing. In a similar vein, the dependance of the convergence on various parameters, including the under-relaxation parameter $\theta$ and for unsteady problems, the time step $\Delta t$ and temporal order of convergence, should be investigated. The algorithm described here also has important implications regarding the construction and application of preconditioners for the associated linear systems. We expect the outlined approach to be beneficial in this respect, as the split-problem formulation allows one to employ uniform (or quasi-uniform) meshes for the separate problems, mitigating much of the difficulty associated with preconditioning. However, this must be explored and confirmed in more detail.
\par For ease of presentation and mathematical analysis, we have restricted our attention in this work to cases in which $\beta_-,\,\beta_+,\,\rho_-,$ and $\rho_+$ do not depend on the unknown $u$. For nonlinear problems where some or all of these parameters may depend on $u$, one must extend the approaches shown here. A natural example of such an extension is the Picard-type iterative method obtained by employing analogous two-level methods based on the splittings (\ref{pb1NoAlgNonCons})-(\ref{pb2NoAlgNonConsUnst}), and at an iteration $k$, letting  $\beta\left(u\right):=\beta\left(u^{k-1}\right)$ and $\rho\left(u\right):=\rho\left(u^{k-1}\right)$. While some initial tests (not presented here) suggest this approach works, such a scheme must be analyzed and tested more rigorously. The development of a Newton-type scheme, where the nonlinear solver may converge more rapidly, is also worthy of further investigation.
\par From the engineering and industrial perspective, the proposed method must be further validated on more realistic problems. In particular, we intend to apply it to additive manufacturing problems of genuine engineering interest, in which we incorporate phase change and other such phenomena, in both two and three dimensions. As these problems are highly nonlinear in general, the concerns regarding the algorithm's application to nonlinear problems is especially important.

\section{Appendix: Proofs of Theorems \ref{thm:nonconsSteady} and \ref{thm:nonconsUnsteady}}
In this section, we provide proofs for Theorems \ref{thm:nonconsSteady} and \ref{thm:nonconsUnsteady}. We note the extension of the splittings given by Problems \ref{pb1NoAlgNonCons} and \ref{pb2NoAlgNonCons} (steady) and Problems \ref{pb1NoAlgNonConsUnst} and \ref{pb2NoAlgNonConsUnst} (unsteady) to algorithms of the type shown in Section 3 then follow in the obvious way. 
\par For fully nonlinear problems in which $\beta$ and/or $\rho$ are not simply non-constant but depend on the unknown temperature field $u$, the following theorems are useful to establish the consistency of a single-step of some nonlinear iterative scheme (such as a Picard-type method). Proving the convergence of such a scheme, however, is beyond the scope of this work and an important subject for future research.
\subsection{Proof of Theorem \ref{thm:nonconsSteady}. }
\begin{proof} Let $u$ be a solution of (\ref{Model1}) where $\beta$ is understood to be non-constant. Define $\widetilde{u}$ as:
\begin{eqnarray}\label{uTildeDefNL}
\widetilde{u} := u|_{\Omega_-}.
\end{eqnarray}
(\ref{uTildeDefNL}) ensures $\widetilde{u}$ then satisfies (\ref{pb2NoAlgNonCons}) trivially by definition.  Then for $u \in H^{1}\left(\Omega\right)$ and $\varphi \in C_{0}^{\infty}\left(\Omega\right)$:
\begin{align}\begin{split}\label{proofStep2NL}
\int_{\Omega} \beta \nabla{u} \cdot \nabla{\varphi} &= \int_{\Omega} \beta_+ \nabla{u} \cdot \nabla{\varphi} - \int_{\Omega_-} \beta_+ \nabla u \cdot \nabla \varphi + \int_{\Omega_-} \beta_- \nabla u \cdot \nabla \varphi  \\
	&= \int_{\Omega} \beta_+ \nabla{u} \cdot \nabla{\varphi} - \int_{\Omega_-} \beta_+ \nabla \widetilde{u} \cdot \nabla \varphi + \int_{\Omega_-} \beta_- \nabla \widetilde{u} \cdot \nabla \varphi.  
\end{split}\end{align}
We now prove a pair of brief lemmas which we will need to proceed.
\begin{lemma}\label{omegaPlusLabelNL}
Under the assumptions of the theorem, if $\beta_-\neq 0$ in $\Omega_-$,	\begin{align}\label{betaPlusIdentity} -\nabla \cdot\left( \beta_+ \nabla \widetilde{u}\right) = \frac{\beta_+}{\beta_-}f + \frac{\beta_+}{\beta_-}\nabla \widetilde{u} \cdot \nabla \beta_- - \nabla \widetilde{u}\cdot\nabla\beta_+\,\,\, \text{ in } \Omega_-. \end{align}
\end{lemma}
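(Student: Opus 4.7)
The plan is to prove the identity by a direct manipulation of the product rule together with the PDE satisfied by $\widetilde u$. Since $\widetilde u = u|_{\Omega_-}$ and $u$ solves Problem (\ref{Model1}), we have the pointwise identity (in the weak sense)
\begin{equation*}
-\nabla\cdot(\beta_- \nabla\widetilde u) = f \quad\text{in }\Omega_-.
\end{equation*}
Expanding with the product rule gives $-\nabla\beta_- \cdot \nabla\widetilde u - \beta_- \Delta\widetilde u = f$.

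First, I would solve this for $\Delta\widetilde u$, using $\beta_-\neq 0$ in $\Omega_-$, to obtain
\begin{equation*}
\Delta\widetilde u = -\frac{f}{\beta_-} - \frac{\nabla\beta_- \cdot \nabla\widetilde u}{\beta_-}.
\end{equation*}
Next, I would apply the product rule to the left-hand side of (\ref{betaPlusIdentity}):
\begin{equation*}
-\nabla\cdot(\beta_+ \nabla\widetilde u) = -\nabla\beta_+\cdot\nabla\widetilde u - \beta_+ \Delta\widetilde u.
\end{equation*}
Substituting the expression for $\Delta\widetilde u$ from the previous step yields exactly
\begin{equation*}
-\nabla\cdot(\beta_+ \nabla\widetilde u) = -\nabla\beta_+\cdot\nabla\widetilde u + \frac{\beta_+}{\beta_-} f + \frac{\beta_+}{\beta_-}\nabla\beta_- \cdot \nabla\widetilde u,
\end{equation*}
which is the claimed identity.

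The only subtle point, and the one I would need to address carefully, is the regularity required to make sense of these manipulations. The assumption $\beta_+/\beta_- \in H^1(\Omega_-)$ in Theorem \ref{thm:nonconsSteady} is exactly what allows the products $(\beta_+/\beta_-)\nabla\beta_-\cdot\nabla\widetilde u$ and the division by $\beta_-$ to be interpreted as elements of a suitable distribution space on $\Omega_-$. Hence I would either phrase the proof distributionally, testing against $\varphi\in C_0^\infty(\Omega_-)$ and using that $\widetilde u\in H^1(\Omega_-)$, or equivalently multiply through by $\beta_-$ to keep every manipulation at the level of $H^{-1}(\Omega_-)$. Everything else is a routine product-rule computation.
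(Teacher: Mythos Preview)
Your proof is correct and follows essentially the same route as the paper: start from $-\nabla\cdot(\beta_-\nabla\widetilde u)=f$, use the product rule to isolate the Laplacian term, divide by $\beta_-$ and multiply by $\beta_+$, then reassemble via the product rule into $-\nabla\cdot(\beta_+\nabla\widetilde u)$. The paper phrases the middle step as ``add and subtract $\nabla\widetilde u\cdot\nabla\beta_+$'' rather than ``solve for $\Delta\widetilde u$ and substitute,'' but the computations are identical, and your closing remark on regularity is, if anything, more careful than the paper's argument.
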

\begin{proof}
Since $u$ satisfies (\ref{Model1}) weakly in $\Omega$, it follows from (\ref{proofStep2NL}) that $$\int_{\Omega} \beta_+ \nabla{u} \cdot \nabla{\varphi} - \int_{\Omega_-} \beta_+ \nabla \widetilde{u} \cdot \nabla \varphi + \int_{\Omega_-} \beta_- \nabla \widetilde{u} \cdot \nabla \varphi =\int_{\Omega} f\varphi\,\,\, \text{ in } \Omega_-.$$ In particular, we have that:
\begin{align}\label{betaMinusIdentity}-\nabla \cdot\left( \beta_- \nabla \widetilde{u}\right)  = f \,\,\, \text{ in } \Omega_-. \end{align}
We recall the product rule for the divergence of a product of a vector function $\aaa$ and a scalar function $b$: 
\begin{align}\label{divergenceProductRule}
	\nabla \cdot \left(b \aaa\right) &= b \Delta \aaa + \nabla b \cdot \aaa
\end{align}
Applying (\ref{divergenceProductRule}) and elementary manipulations to (\ref{betaMinusIdentity}) gives:
\begin{align}\begin{split}\label{lemmaStep1}
-\beta_+ \Delta \tu &= \frac{\beta_+}{\beta_-} f + \frac{\beta_+}{\beta_-} \nabla \tu \cdot \nabla \beta_-  \end{split}\end{align}
Adding and subtracting $\nabla \tu \cdot \nabla \beta_+ $ to (\ref{lemmaStep1}), we then have:
\begin{align}\begin{split}\label{lemmaStep2}
-\beta_+ \Delta \tu -\nabla \tu \cdot \nabla \beta_+ + \nabla \tu \cdot \nabla \beta_+   &= \frac{\beta_+}{\beta_-} f + \frac{\beta_+}{\beta_-} \nabla \tu \cdot \nabla \beta_- \\
-\nabla \cdot \left(\beta_+ \nabla \tu\right) &= \frac{\beta_+}{\beta_-} f + \frac{\beta_+}{\beta_-} \nabla \tu \cdot \nabla \beta_- - \nabla \tu \cdot \nabla \beta_+
\end{split}\end{align}
where the last line follows from applying (\ref{divergenceProductRule}) in reverse, completing the proof.
\end{proof}
\begin{lemma}\label{omegaPlusLemma2}
Under the assumptions of the theorem, if $\beta_-\neq 0$ in $\Omega_-$,	\begin{align}\label{betaLaplaceIdentity} 
	\beta_+ \Delta \tu &= \frac{\beta_+}{\beta_-} \big\lbrack  \nabla\cdot\left(\beta_- \nabla \tu\right) - \nabla \beta_- \cdot \nabla \tu \big\rbrack \end{align}
\end{lemma}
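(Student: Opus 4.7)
The plan is to observe that Lemma 7.2 is essentially a purely algebraic identity derived directly from the product rule for divergence, already introduced in (\ref{divergenceProductRule}) during the proof of Lemma 7.1. No variational argument or integration by parts is needed here; the argument is local and pointwise, valid wherever the objects involved make sense.

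First I would apply the divergence product rule to the vector field $\beta_- \nabla \tu$, which yields
\begin{equation*}
\nabla \cdot (\beta_- \nabla \tu) = \beta_- \Delta \tu + \nabla \beta_- \cdot \nabla \tu.
\end{equation*}
Solving this identity for $\beta_- \Delta \tu$ gives
\begin{equation*}
\beta_- \Delta \tu = \nabla \cdot (\beta_- \nabla \tu) - \nabla \beta_- \cdot \nabla \tu.
\end{equation*}
Since by assumption $\beta_- \neq 0$ on $\Omega_-$, the quotient $\beta_+/\beta_-$ is well-defined there, and multiplying both sides by this quotient yields exactly the claimed identity.

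There is no real obstacle in this proof; the only minor point worth flagging is the regularity needed for the product rule to hold in a classical sense. Under the hypothesis $\beta_+/\beta_- \in H^1(\Omega_-)$ inherited from Theorem \ref{thm:nonconsSteady} and the smoothness of $\tu$ implicitly used elsewhere in the argument, the manipulation is justified at least in the distributional sense on $\Omega_-$, which is all that the subsequent use of the lemma requires. Thus the proof amounts to a single invocation of (\ref{divergenceProductRule}) followed by division by $\beta_-$.
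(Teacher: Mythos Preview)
Your proof is correct and follows essentially the same approach as the paper: both apply the product rule (\ref{divergenceProductRule}) to $\beta_-\nabla\tu$ to isolate $\beta_-\Delta\tu$, then multiply by $\beta_+/\beta_-$. The paper takes a slightly more circuitous route by first expanding $\nabla\cdot(\beta_+\nabla\tu)$ and then cancelling the $\nabla\beta_+\cdot\nabla\tu$ terms, but the core argument is identical to yours.
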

\begin{proof}
From (\ref{divergenceProductRule}):
\begin{align}\begin{split}\label{lemma2Step1}
	\nabla \cdot \left(\beta_+ \nabla\tu\right) &= \nabla\beta_+ \cdot \nabla \tu + \beta_+ \Delta \tu  \\
	&= \nabla\beta_+ \cdot \nabla \tu + \frac{\beta_+}{\beta_-}\beta_- \Delta \tu \\
	&= \nabla\beta_+ \cdot \nabla \tu + \frac{\beta_+}{\beta_-} \bigg\lbrack \nabla \cdot \left(\beta_- \nabla \tu\right) - \nabla \beta_- \cdot \nabla \tu \bigg\rbrack,
\end{split}\end{align}
where the last line follows from a second application of (\ref{divergenceProductRule}). This implies:
\begin{align}\begin{split}
	\nabla\beta_+ \cdot \nabla \tu + \beta_+ \Delta \tu &= \nabla\beta_+ \cdot \nabla \tu + \frac{\beta_+}{\beta_-} \bigg\lbrack \nabla \cdot \left(\beta_- \nabla \tu\right) - \nabla \beta_- \cdot \nabla \tu \bigg\rbrack \\
	 \beta_+ \Delta \tu &=  \frac{\beta_+}{\beta_-} \bigg\lbrack \nabla \cdot \left(\beta_- \nabla \tu\right) - \nabla \beta_- \cdot \nabla \tu \bigg\rbrack,
\end{split}\end{align}
establishing the lemma.
\end{proof}
Returning to the main theorem, integrating by parts the integrals in $\Omega_{-}$ on the right hand side of (\ref{proofStep2NL}) yields:
\begin{align}\begin{split}\label{proofStep3NL}\MoveEqLeft[1] 
- \int_{\Omega_-} \beta_+ \nabla \widetilde{u} \cdot \nabla \varphi + \int_{\Omega_-} \beta_- \nabla \widetilde{u} \cdot \nabla \varphi \\ \quad{}& = \int_{\Omega_-} \nabla \cdot\left( \beta_+ \nabla \widetilde{u}\right) \varphi - \int_{\gamma} \beta_{+} \frac{\partial \widetilde{u}}{\partial \nn} \varphi - \int_{\Omega_-} \nabla \cdot\left( \beta_- \nabla \widetilde{u}\right)  \varphi + \int_{\gamma} \beta_{-} \frac{\partial \widetilde{u}}{\partial \nn} \varphi \\
\end{split}\end{align}	
Which from (\ref{betaPlusIdentity}) and (\ref{betaMinusIdentity}) reduces to:
\begin{align}\begin{split}\label{proofStep4NL}\MoveEqLeft[2] 
- \int_{\Omega_-} \beta_+ \nabla \widetilde{u} \cdot \nabla \varphi + \int_{\Omega_-} \beta_- \nabla \widetilde{u} \cdot \nabla \varphi \\ \quad{}& =  \int_{\Omega_-} \left(1-\frac{\beta+}{\beta_-}\right) f\varphi - \int_{\Omega_-} \frac{\beta+}{\beta_-} \left(\nabla \tu \cdot \nabla\beta_-\right) \varphi + \int_{\Omega_-} \left(\nabla \tu \cdot \nabla \beta_+\right) \varphi + 
\int_{\gamma} \left(\beta_- - \beta_+\right)\frac{\partial \widetilde{u}}{\partial \nn} \varphi 
\\
\end{split}\end{align}	
Combining (\ref{proofStep4NL}) with (\ref{proofStep2NL}) then gives: 
\begin{align}\begin{split}\MoveEqLeft[2]
    \int_{\Omega} \beta_+ \nabla{u} \cdot \nabla{\varphi} \\ \quad{}& = \int_{\Omega_+} f\varphi +   \int_{\Omega_-} \frac{\beta+}{\beta_-} f\varphi  
     + \int_{\Omega_-} \frac{\beta+}{\beta_-} \left(\nabla \tu \cdot \nabla\beta_-\right) \varphi
     \\ \quad{}&
     - \int_{\Omega_-} \left(\nabla \tu \cdot \nabla \beta_+\right) \varphi +
    \int_{\gamma} \beta_+ \frac{\partial \widetilde{u} }{\partial \nn} \varphi -  \int_{\gamma}\beta_-\frac{\partial \widetilde{u} }{\partial \nn} \varphi,
\end{split}\end{align}
which after backward integration over $\Omega$ gives:
\begin{align}\begin{split}\label{resultSteadyNL}
\MoveEqLeft[1]
  	-\nabla \cdot \left(\beta_{+} \nabla {u} \right) \\ \quad{}& =  f\big|_{\Omega_+} +\frac{\beta_+}{\beta_-}f\big|_{\Omega_-}
  	+\frac{\beta_+}{\beta_-}\nabla \widetilde{u} \cdot \nabla \beta_- \big|_{\Omega_-}
  	- \nabla \widetilde{u} \cdot \nabla \beta_+ \big|_{\Omega_-}
  	    +  \left(\beta_{+}-\beta_-\right) \frac{\partial \widetilde{u} }{\partial \nn}  \delta_{\gamma}   \text{  in  } \Omega.
\end{split}\end{align}
which was to be shown.
\par Now let $(u,\tu)$ be a solution pair of the coupled problem. We seek to verify that $u=\tu$ in $\Omega_-$. This is equivalent to establishing that $u-\tu$ satisfies the equation:
\begin{align}\label{verifPbm}
-\nabla \cdot \beta_+ \nabla (u - \widetilde u) = 0, \quad \text{in }\Omega_-, \qquad u - \widetilde u = 0, \quad \text{in }\gamma,
\end{align} 
which has unique solution zero. Note that $u=\tu$ on $\gamma$ by hypothesis. For all $\varphi \in H_0^1(\Omega_-)$, we have\footnote{Note that as (\ref{verifPbm}) is zero on the boundary, we may demand this on our test space as well without loss of generality.}:
\begin{align}
\int_{\Omega_-} \beta_+ \nabla\left(\tu-u\right)\cdot \nabla \varphi	  &= - \int_{\Omega_-} \nabla \cdot \left\lbrack \beta_+ \nabla\left(\tu-u\right)\right\rbrack\varphi \\
&= -\int_{\Omega_-} \nabla\cdot \left( \beta_+ \nabla \tu \right) \varphi + \int_{\Omega_-} \nabla\cdot \left( \beta_+ \nabla u \right) \varphi \\
\begin{split}\label{SteadyNonConsStep2}
&= -\int_{\Omega_-} \nabla \beta_+ \cdot \nabla\tu \, \varphi - \int_{\Omega_-} \beta_+ \Delta \tu \, \varphi + \int_{\Omega_-} \nabla\cdot \left( \beta_+ \nabla u \right) \varphi. \\
\end{split}\end{align}
By applying Lemma \ref{omegaPlusLemma2} to the middle term of (\ref{SteadyNonConsStep2}), we then obtain:
\begin{align}\begin{split}\label{SteadyNonConsStep3}\MoveEqLeft[1]
\int_{\Omega_-} \beta_+ \nabla\left(\tu-u\right)\cdot \nabla \varphi \\
&{}= -\int_{\Omega_-} \nabla \beta_+ \cdot \nabla\tu \, \varphi -\int_{\Omega_-}  \nabla \cdot \left(\beta_- \nabla\tu\right)\frac{\beta_+}{\beta_-} \varphi + \int_{\Omega_-} \frac{\beta_+}{\beta_-} \nabla \beta_-\cdot \nabla \tu + \int_{\Omega_-} \nabla\cdot \left( \beta_+ \nabla u \right) \varphi.
\end{split}\end{align}
Applying integration by parts to the second term on the right-hand side of (\ref{SteadyNonConsStep3}), we observe that:
\begin{align}\begin{split}\label{SteadyNonConsStep4}
	-\int_{\Omega_-}  \nabla \cdot \left(\beta_- \nabla\tu\right)\frac{\beta_+}{\beta_-} \varphi &= \int_{\Omega_-} \beta_- \nabla \tu \cdot \nabla\left(\frac{\beta_+}{\beta_-} \varphi\right),
\end{split}\end{align}
with $\varphi\in H_0^1\left(\Omega_-\right)$ ensuring zero boundary terms. As $\tu$ satisfies (\ref{pb2NoAlgNonCons}) weakly in $\Omega_-$, this implies that for all $\varphi \in H^1(\Omega_-)$:
\begin{align}\label{weakSatisf}
	\int_{\Omega_-} \beta_- \nabla \tu \cdot \nabla \varphi &= \int_{\Omega_-} f\,\varphi.
\end{align}
As $\beta_+/\beta_- \, \in H^1(\Omega_-)$, so too is $\frac{\beta_+}{\beta_-}\varphi \, \in H_0^1(\Omega_-)$ and this together with (\ref{SteadyNonConsStep4}) and (\ref{weakSatisf}) yields:
\begin{align}\begin{split}\label{SteadyNonConsStep5}
	-\int_{\Omega_-}  \nabla \cdot \left(\beta_- \nabla\tu\right)\frac{\beta_+}{\beta_-} \varphi &= \int_{\Omega_-} f\,\frac{\beta_+}{\beta_-} \varphi .
\end{split}\end{align}
Substituting (\ref{SteadyNonConsStep5}) into (\ref{SteadyNonConsStep3}) then gives:
\begin{align}\begin{split}\label{SteadyNonConsStep6}\MoveEqLeft[1]
\int_{\Omega_-} \beta_+ \nabla\left(\tu-u\right)\cdot \nabla \varphi \\
&{}= -\int_{\Omega_-} \nabla \beta_+ \cdot \nabla\tu \, \varphi +\int_{\Omega_-} f\,\frac{\beta_+}{\beta_-} \varphi + \int_{\Omega_-} \frac{\beta_+}{\beta_-} \nabla \beta_-\cdot \nabla \tu\,\varphi + \int_{\Omega_-} \nabla\cdot \left( \beta_+ \nabla u \right) \varphi.
\end{split}\end{align}
The definition of (\ref{pb1NoAlgNonCons}) ensures that for all $\varphi$ in $H_0^1(\Omega_-)$,
\begin{align}\begin{split}\label{SteadyNonConsStep7}
-\int_{\Omega_-} \nabla\cdot \left( \beta_+ \nabla u \right) \varphi 
&= \int_{\Omega_-} \frac{\beta_+}{\beta_-} f\,\varphi + \int_{\Omega} \frac{\beta_+}{\beta_-} \nabla \beta_- \cdot \nabla \tu \, \varphi - \int_{\Omega_-} \nabla\beta_+\cdot\nabla\tu \, \varphi.
\end{split}\end{align}
Substituting (\ref{SteadyNonConsStep7}) into the last term on the right-hand side of (\ref{SteadyNonConsStep6}) establishes:
\begin{align}\label{SteadyNonConsStep8}
\int_{\Omega_-} \beta_+ \nabla\left(\tu-u\right)\cdot \nabla \varphi &= 0,
\end{align}
Since $u=\tu$ in $\Omega_-$, one may then easily verify that $u$ satisfies:
\begin{align}
	-\nabla \cdot \left(\beta \nabla u\right) &= f \qquad \text{in } \Omega\slash\gamma.
\end{align}
and that:
\begin{align}\label{jumps}
\jump{\beta_+ \nabla u \cdot \nn} = (\beta_+ - \beta_- ) \frac{\partial \widetilde u}{\partial \nn} = (\beta_+ - \beta_- ) \frac{\partial u}{\partial \nn},
\end{align}
implying that $\jump{\beta \nabla \cdot \nn}=0$, completing the proof.
\end{proof}
\subsection{Proof of Theorem \ref{thm:nonconsUnsteady}}
\begin{proof}
Let $u$ be a solution of (\ref{Model2}) and define $\tu$ as in (\ref{uTildeDefNL}). Then familiar arguments give:
 for $u \in H^{1}\left(\Omega\right)$ and $\varphi \in C_{0}^{\infty}\left(\Omega\right)$:
\begin{align}\begin{split}\label{proofStep2UnstNL}\MoveEqLeft[4]
\int_{\Omega} \rho \frac{\partial u}{\partial t}\varphi + \int_{\Omega} \beta \nabla{u} \cdot \nabla{\varphi} \\ \quad{}& = \int_{\Omega} \rho_+ \frac{\partial u}{\partial t} \varphi - \int_{\Omega_-} \rho_+ \frac{\partial u}{\partial t} \varphi + \int_{\Omega_-} \rho_- \frac{\partial u}{\partial t} \varphi \\
\quad{}& +  \int_{\Omega} \beta_+ \nabla{u} \cdot \nabla{\varphi} - \int_{\Omega_-} \beta_+ \nabla u \cdot \nabla \varphi + \int_{\Omega_-} \beta_- \nabla u \cdot \nabla \varphi \\
\quad{}& = \int_{\Omega} \rho_+ \frac{\partial u}{\partial t} \varphi - \int_{\Omega_-} \rho_+ \frac{\partial \widetilde{u}}{\partial t} \varphi + \int_{\Omega_-} \rho_- \frac{\partial \widetilde{u}}{\partial t} \varphi \\
\quad{}& +  \int_{\Omega} \beta_+ \nabla{u} \cdot \nabla{\varphi} - \int_{\Omega_-} \beta_+ \nabla \widetilde{u} \cdot \nabla \varphi + \int_{\Omega_-} \beta_- \nabla \widetilde{u} \cdot \nabla \varphi \\
\end{split}\end{align}
Since $u$ satisfies (\ref{Model2}), from (\ref{proofStep2UnstNL}):
\begin{align}\begin{split}\label{moreClarity2NL}\MoveEqLeft[4]	
	\int_{\Omega} \rho_+ \frac{\partial u}{\partial t} \varphi - \int_{\Omega_-} \rho_+ \frac{\partial \widetilde{u}}{\partial t} \varphi + \int_{\Omega_-} \rho_- \frac{\partial \widetilde{u}}{\partial t} \varphi \\
\quad{}& +  \int_{\Omega} \beta_+ \nabla{u} \cdot \nabla{\varphi} - \int_{\Omega_-} \beta_+ \nabla \widetilde{u} \cdot \nabla \varphi + \int_{\Omega_-} \beta_- \nabla \widetilde{u} \cdot \nabla \varphi \\
\quad{}&= \int_{\Omega} f \varphi  \text{  in  } \Omega,
\end{split}\end{align}
and in particular:
\begin{align}\label{proofStep3NLU}
-\nabla \cdot \left(\beta_-\nabla\tu\right)&= f - \rho_-\frac{\partial \tu}{\partial t} \,\,\,\, \text{     in     } \Omega_-.
\end{align}
After applying (\ref{divergenceProductRule}) to (\ref{proofStep3NLU}), multiply by $\beta_+$ and rearranging:
\begin{align}\label{proofStep3NLU}
-\beta_+ \Delta \tu  &= \frac{\beta_+}{\beta_-} f - \frac{\beta_+\rho_-}{\beta_-} \frac{\partial \tu}{\partial t} + \frac{\beta_+}{\beta_-}\nabla \tu \cdot \nabla \beta_- \,\,\,\, \text{     in     } \Omega_-.
\end{align}
As done in (\ref{lemmaStep2}), we add and subtract $\nabla\tu\cdot\nabla\beta_+$ and apply (\ref{divergenceProductRule}), yielding:
\begin{align}\label{proofStep4FINAL}
- \nabla \cdot \left( \beta_+ \nabla \tu\right)  &= \frac{\beta_+}{\beta_-} f - \frac{\rho_-\beta_+}{\beta_-} \frac{\partial \tu}{\partial t} + \frac{\beta_+}{\beta_-}\nabla \tu \cdot \nabla \beta_- - \nabla\tu\cdot\nabla\beta_+ \,\,\,\, \text{     in     } \Omega_-.
\end{align}
Following the same approach as in the previous results, one integrates by parts the terms:
\begin{align}\begin{split}\label{proofStep4NLU}\MoveEqLeft[1] 
- \int_{\Omega_-} \beta_+ \nabla \widetilde{u} \cdot \nabla \varphi + \int_{\Omega_-} \beta_- \nabla \widetilde{u} \cdot \nabla \varphi \\ \quad{}& = \int_{\Omega_-} \nabla \cdot\left( \beta_+ \nabla \widetilde{u}\right) \varphi - \int_{\gamma} \beta_{+} \frac{\partial \widetilde{u}}{\partial \nn} \varphi - \int_{\Omega_-} \nabla \cdot\left( \beta_- \nabla \widetilde{u}\right)  \varphi + \int_{\gamma} \beta_{-} \frac{\partial \widetilde{u}}{\partial \nn} \varphi. \\
\end{split}\end{align}	
Substituting (\ref{proofStep3NLU}) and (\ref{proofStep4FINAL}) into (\ref{proofStep4NLU}) above:
\begin{align}\begin{split}\label{proofStep5NLU}\MoveEqLeft[1] 
- \int_{\Omega_-} \beta_+ \nabla \widetilde{u} \cdot \nabla \varphi
+ \int_{\Omega_-} \beta_- \nabla \widetilde{u} \cdot \nabla \varphi \\ \quad{}& =  \int_{\Omega_-} \left(1-\frac{\beta_+}{\beta_-}\right) f \varphi 
 +\int_{\Omega_-} \left(\frac{\rho_-\beta_+}{\beta_-}-\rho_-\right) \frac{\partial \tu}{\partial t}
  \\ \quad{}& - \int_{\gamma} \beta_{+} \frac{\partial \widetilde{u}}{\partial \nn} \varphi  
+\int_{\Omega_-} \nabla \tu \cdot \nabla \beta_+ - \int_{\Omega_-} \frac{\beta_+}{\beta_-}\nabla \tu \cdot \nabla \beta_-
  + \int_{\gamma} \beta_{-} \frac{\partial \widetilde{u}}{\partial \nn} \varphi. \\
\end{split}\end{align}
Combining (\ref{proofStep2NL}) and (\ref{proofStep5NLU}):
\begin{align}\begin{split}\label{proofStepENDNLUNST}\MoveEqLeft[1]
\int_{\Omega} \rho_+ \frac{\partial u}{\partial t} \varphi + \int_{\Omega} \beta_+ \nabla u \cdot \nabla \varphi \\ \quad{}& = 
\int_{\Omega_-} \left(\rho_+ - \frac{\rho_- \beta_+}{\beta_-} \right)\frac{\partial \widetilde{u} }{\partial t} \varphi + \int_{\Omega_+} f\varphi + \int_{\Omega_-} \frac{\beta_+}{\beta_-} f\varphi  \\ \quad{}& + \int_{\gamma}\beta_+ \frac{\partial \widetilde{u} }{\partial \nn} \varphi -  \int_{\gamma} \beta_-\frac{\partial \widetilde{u} }{\partial \nn} \varphi -\int_{\Omega_-} \nabla \tu \cdot \nabla \beta_+ + \int_{\Omega_-} \frac{\beta_+}{\beta_-}\nabla \tu \cdot \nabla \beta_-,
\end{split}\end{align}
which was to be shown. 
\par To prove the other direction, we proceed as in the previous analyses: let $(u,\,\tu)$ solve (\ref{pb1NoAlgNonConsUnst}) and (\ref{pb2NoAlgNonConsUnst}) and show that $\tu-u$ solves (\ref{verifPbmK}), after which backward integration over (\ref{proofStepENDNLUNST}) gives the result. 
\par For all $\varphi \in H_0^1(\Omega_-(t))$:
\begin{align}\label{stp1}
\int_{\Omega_-} \rho_+ \frac{\partial (\tu-u)}{\partial t}\varphi - \int_{\Omega_-} \nabla\cdot\left\lbrack \beta_+ \nabla(\tu-u)\right\rbrack\varphi &= 0
\end{align}
 and hence:
 \begin{align}\label{stp2}
\int_{\Omega_-} \rho_+ \frac{\partial \tu}{\partial t}\varphi - \int_{\Omega_-} \nabla\cdot\left( \beta_+ \nabla\tu\right)\varphi &= \int_{\Omega_-} \rho_+ \frac{\partial u}{\partial t}\varphi - \int_{\Omega_-} \nabla\cdot\left( \beta_+ \nabla u \right)\varphi
\end{align}
Applying (\ref{proofStep4FINAL}) above:
\begin{align}\begin{split}\label{stp3}\MoveEqLeft[1]
\int_{\Omega_-} \left(\rho_+ -\frac{\beta_+\rho_-}{\beta_-}\right) \frac{\partial \tu}{\partial t}\varphi + \int_{\Omega_-} \frac{\beta_+}{\beta_-} f\varphi - \int_{\Omega_-}\frac{\beta_+}{\beta_-}\nabla\tu\cdot\nabla\beta_-\varphi +\int_{\Omega_-} \nabla \tu \cdot \nabla\beta_+ \varphi \\ 
{}&= \int_{\Omega_-} \rho_+ \frac{\partial u}{\partial t}\varphi - \int_{\Omega_-} \nabla\cdot\left( \beta_+ \nabla u \right)\varphi.
\end{split}\end{align}
From the definition of Problem (\ref{pb1NoAlgNonConsUnst}), the left and right hand sides are equal, establishing $\tu=u$ on $\Omega_-$. We note that jump condition is satisfied for identical reasons as before, completing the proof.
\end{proof}

\bibliography{Viguerie_Veneziani.bib}
\bibliographystyle{plain}

\end{document}